\newtheorem{thm}{Theorem}[section]
\newtheorem{lem}[thm]{Lemma}
\newtheorem{prop}[thm]{Proposition}
\newtheorem{as}{Assumption}
\theoremstyle{definition}
\newtheorem{rem}[thm]{Remark}
\numberwithin{equation}{section}
\newcommand{\ctwo}{\mathcal{C}^{2,1}(\mathbb{R}^{N} \times [0,T)) \cap \mathcal{C}(\mathbb{R}^{N} \times [0,T])}
\newcommand{\id}{i(y,\delta)}
\newcommand{\fd}{f(y,\delta)}
\newcommand{\hd}{h(y,\delta)}
\newcommand{\sid}{i(Y_{t},\delta_{t})}
\newcommand{\sfd}{f(Y_{t},\delta_{t})}
\newcommand{\shd}{h(Y_{k},\delta_{k})}
\newcommand{\sgd}{g(Y_{T})}
\newcommand{\itT}{\int_{t}^{T}}
\newcommand{\iot}{\int_{0}^{t}}
\newcommand{\ex}{\mathbb{E}_{x,y}}
\newcommand{\et}{\mathbb{E}_{x,y,t}}
\newcommand{\ey}{\mathbb{E}_{y,0}}
\begin{document}
\baselineskip=17pt

\title[Discounted reward control problem]{Smooth solutions to discounted reward control problems  with unbounded discount rate and financial applications}

\author[D.Zawisza]{Dariusz Zawisza}

\address{\noindent Dariusz Zawisza, \newline \indent Institute of Mathematics \newline \indent Faculty of Mathematics and Computer Science \newline \indent  Jagiellonian University in Krakow \newline \indent{\L}ojasiewicza  6 \newline \indent 30-348 Krak{\'o}w, Poland }

\email{dariusz.zawisza@im.uj.edu.pl}

\date{}

\begin{abstract}
We consider a discounted reward control problem in continuous time stochastic environment where the discount rate  might be an unbounded function of the control process.
 We provide a set of general assumptions to ensure that there exists
a smooth classical solution to the corresponding HJB equation. Moreover, some  verification reasoning are provided and the possible extension to dynamic games is discussed. At the end of the paper
consumption - investment problems arising in financial economics are considered.
\end{abstract}

\subjclass[2010]{93E20,91G80}

\keywords{discounted cost control, HJB equation, optimal consumption, portfolio optimization}

\maketitle

\section{Introduction}

We consider a discounted reward stochastic control problem in the continuous time diffusion environment when the control space is a compact set. In addition to the existing literature we assume that the discount rate might be unbounded (the discount rate is denoted further by $h$). We consider both the finite  and the infinite time horizon formulation, with the emphasis on the latter. For both cases we provide a set of assumptions which ensure that the value function of our problem is a smooth classical solution to the corresponding semilinear HJB equation. Such equations are important on its own right, since they arise naturally in many optimization problems, for instance in optimal investment-consumption problems. In such models, the fact that the discount factor might be unbounded give us the possibility to take into account stochastic interest rate models such as the Vasicek model and many others. Aforementioned equations  can be used as well as the first step to solving many unconstrained optimization problems (see Fleming and Hernandez \cite{fleher} moreover,  Friedman \cite{friedman} show that such equations might be employed to solve many deterministic control problems. The proof of main results in the finite horizon case relies on considering first HJB equations with coefficients which satisfy some bounds. Thanks to the stoschastic control representation  of such solution we obtain some estimates on its derivatives and the function itself. Such estimates are further used to apply the Arzell - Ascolli Lemma, when  using bounded functions to approximate unbounded ones.  The proof in the infinite horizon case  is  based on the approximation of the infinite horizon  model using finite horizon control problems and is close to Fleming and McEneaney \cite{mcefle} or  Fleming and Hernandez \cite{fleming}. Apart from the existence theory for HJB equations, we present some results concerning verification arguments and discuss possible extensions to dynamic games. Finally we consider a consumption-investment optimization problem as an example of our general theory.

As it was mentioned our work can be treated as a generalization of some results of Fleming and McEneaney \cite{mcefle}. For the most recent works on the existence of a smooth solution to HJB equations see the work of Rubio \cite{Rubio} (finite horizon problems)  or Lopez-Barrientos et al. \cite{jaso} (infinite horizon formulation). On the other hand there is vast literature about consumption-investment models, where we can find many results concerning existence of solutions to some HJB equations:
 Aktar and Taflin \cite{taflin}, 
Constaneda Leyva and Hernandez \cite{n}, Fleming and Hernandez \cite{fleming}  and Fleming and Pang \cite{fleming2}, Hata and Sheu \cite{hata}, Pang \cite{pang}, Pham \cite{Pham}, Zawisza \cite{zawisza}.
 In most of these works it is usually assumed that the discount rate is constant or bounded from above. As an exception we should mention the work of Fleming and Pang \cite{fleming2} and Pang \cite{pang}, when some specific models with quadratic dependence in the discount rate are considered. Moreover, many of them is focus rather more on some specific unconstrained control problems than on constrained ones.

\section{model}

 Let us consider reference probability system $(\Omega, \mathcal{F}, P)$ with the $N$ dimensonal  Wiener process $\{W_{t}\}_{t \geq 0}$, together with its natural augmented filtration and the stochastic differential equation of the form
\begin{equation} 
dY_{t}=i(Y_{t},\delta_{t})dt + dW_{t}, \quad t \in [0,T] \quad \text{or} \quad t \geq 0\label{ee1}
\end{equation}
$(\delta_{t}, t \in [0,T])$ or respectively $(\delta_{t}, t \geq 0)$, is progressively measurable processes taking values in a fixed compact set $D \subset \mathbb{R}^{k}$. This set of controls  is denoted by  $\mathcal{D}$.  For a notational convenience we write often $Y_{t}$ instead of $Y_{t}^{\delta}$ when there is no possibility of a confusion. We consider here only the trivial diffusion term ($\sigma \equiv 1$), but our results will be valid also for many Lipschitz transformations of $Y$, thus we can manage as well with more complicated dynamics. 
We assume that the controller aim is to maximize
\[
\mathbb{E}_{y,t} \biggl( \int_{t}^{T}e^{\int_{t}^{s}h(Y_{k},\delta_{k})\, dk} f(Y_{s},\delta_{s}) ds  + e^{\int_{t}^{T}h(Y_{k},\delta_{k})\, dk}g(Y_{T})\biggr)
\]
 or in the infinite horizon formulation
 \[
  \mathbb{E}_{y,0} \biggl( \int_{0}^{+\infty}e^{\int_{0}^{s}h(Y_{k},\delta_{k})\, dk} f(Y_{s},\delta_{s}) ds \biggr).
\]
The function $h$ we interpret as the discount rate. The symbol $\mathbb{E}_{y,t}$ is used to reflect the fact that the expected value is calculated under the assumption that the system \eqref{ee1} is starting at time $t$ from the state $y$, but sometimes for a notational convenience we switch the notation and write simply $Y_{k}(y,t)$.

We consider the following set of assumptions.
\begin{as} \label{as} Functions $f$,$g$,$h$,$i$ are continuous, moreover there exist  $L_{1}>0$, $L_{2} \in \mathbb{R} \setminus \{0\}$ such that for $g$, $f$ and $\zeta=f,h,i$ and for all $y, \bar{y} \in \mathbb{R}^{N}$, $\delta \in D$,  we have 
\begin{gather}
|g(y) - g(\bar{y})| \leq L_{1}|y-\bar{y}|, \\ 
|\zeta(y,\delta)-\zeta(\bar{y},\delta)|  \leq  L_1 |y- \bar{y}|, \\
(y-\bar{y})[\id -i(\bar{y},\delta)]  \leq L_{2}|y-\bar{y}|^{2}  \label{lipcond1}.
\end{gather}
\end{as}

Note that under Assumption \ref{as} the unique strong solution to \eqref{ee1} exists.



\begin{prop} \label{kappa}
Under conditions of Assumption \ref{as} for any $T>0$, there exist $M_{T}, K_{T}>0$ that for all $t \in [0,T]$, $y \in \mathbb{R}^{N}$ and for all $\delta \in D$
\begin{equation*} 
\mathbb{E}_{y,0} e^{\int_0^{t} \shd dk}\max \left\{|\sfd|,|g(Y_{t})|,1\right\}  \leq K_{T}e^{M_{T}|y|}.
\end{equation*}
\end{prop}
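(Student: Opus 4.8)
The plan is to bound the integrand pathwise by an explicit exponential of $\sup_{0\le s\le t}|Y_s|$, and then to prove a uniform‑in‑control exponential moment estimate for that supremum by applying It\^o's formula to a smoothed norm of $Y$. \emph{Step 1 (linear growth and reduction).} First I would record that Assumption~\ref{as} forces linear growth: since $D$ is compact and $f,g,h,i$ are continuous, the quantities $\sup_{\delta\in D}(|f(0,\delta)|\vee|h(0,\delta)|\vee|i(0,\delta)|)$ and $|g(0)|$ are finite, so together with the Lipschitz bounds there is a constant $C_0\ge 1$ with
\[
|f(y,\delta)|\vee|g(y)|\vee|h(y,\delta)|\le C_0(1+|y|),\qquad |i(y,\delta)|\le C_0(1+|y|),\qquad (y,\delta)\in\re^{N}\times D.
\]
Writing $Y^{*}_{T}:=\sup_{0\le s\le T}|Y_{s}|$, one then gets, for every admissible control and every $t\in[0,T]$,
\[
e^{\int_0^t h(Y_k,\delta_k)\,dk}\max\{|f(Y_t,\delta_t)|,|g(Y_t)|,1\}\le C_0(1+Y^{*}_{T})\,e^{C_0T+C_0T\,Y^{*}_{T}}\le C_1\,e^{(C_0T+1)Y^{*}_{T}},
\]
using $C_0(1+x)\le C_1 e^{x}$ on $x\ge 0$. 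Hence it suffices to show: for all $\mu>0$, $T>0$ there are $K,M>0$, \emph{independent of the control}, with $\mathbb{E}_{y,0}\,e^{\mu Y^{*}_{T}}\le K e^{M|y|}$.

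\emph{Step 2 (exponential moment of the supremum).} Next I would apply It\^o's formula to $\psi(Y_t)$ with $\psi(z)=\sqrt{1+|z|^2}$, which is $C^2$ with $|\nabla\psi|\le 1$ and $|\Delta\psi|\le N$. Using $|i(y,\delta)|\le C_0(1+|y|)$ and $|z|^2/\psi(z)\le\psi(z)$ one gets a decomposition $\psi(Y_t)=\psi(y)+\int_0^t b_s\,ds+M_t$ in which $b_s\le\alpha\,\psi(Y_s)+\beta$ with $\alpha=C_0$, $\beta=C_0+N/2$, and $M_t=\int_0^t\nabla\psi(Y_s)\cdot dW_s$ satisfies $\langle M\rangle_t\le t$. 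Bounding $M_t$ from above by $\sup_{s\le t}M_s$ (nondecreasing in $t$) and applying Gronwall's lemma gives the \emph{pathwise} estimate
\[
\sup_{0\le s\le T}\psi(Y_s)\le e^{\alpha T}\Big(\psi(y)+\beta T+\sup_{0\le s\le T}M_s\Big).
\]
Since $|Y_s|\le\psi(Y_s)$ and $\psi(y)\le 1+|y|$, it then remains to bound $\mathbb{E}_{y,0}\,e^{\lambda\sup_{[0,T]}M_s}$ for $\lambda=\mu e^{\alpha T}$. Here $\exp\!\big(\theta M_t-\frac{\theta^2}{2}\langle M\rangle_t\big)$ is a nonnegative supermartingale of initial value $1$, so optional stopping at the hitting time of a level $a$ gives $P(\sup_{[0,T]}M_s\ge a)\le e^{-a^2/(2T)}$, and therefore
\[
\mathbb{E}_{y,0}\,e^{\lambda\sup_{[0,T]}M_s}\le 1+\lambda\int_0^{\infty} e^{\lambda a-a^2/(2T)}\,da<\infty,
\]
a finite constant depending only on $\lambda$ and $T$. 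Assembling Steps~1 and 2 and tracking constants proves the proposition, with $M_T$ a fixed multiple of $(C_0T+1)e^{C_0T}$ and $K_T$ an explicit constant built from $C_0$, $N$, $T$.

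\emph{Main obstacle.} The substantive step is the exponential moment bound for $Y^{*}_{T}$; the rest is bookkeeping. Two points need care. First, all constants must be \emph{uniform over admissible controls}: this holds because only the growth constant $C_0$ of $i$ and the universal bound $\langle M\rangle_t\le t$ enter the argument, neither of which depends on $\delta$. Second, the It\^o/Gronwall manipulation must be legitimate: it is, since $\psi$ is $C^2$ with bounded first and second derivatives (so $\psi(Y_t)$ is a genuine It\^o process and no localization subtlety arises) and the Gronwall step is purely pathwise. If one prefers to avoid the exponential‑supermartingale tail estimate, the Dambis--Dubins--Schwarz representation $M_t=B_{\langle M\rangle_t}$ bounds $\sup_{[0,T]}M_s$ by $\sup_{0\le u\le T}|B_u|$ and reduces the moment bound to the classical one for the Brownian maximum.
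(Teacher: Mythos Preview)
Your argument is correct. The route differs from the paper's in one respect worth noting: the paper exploits the additive‑noise structure $Y_t=y+\int_0^t i(Y_s,\delta_s)\,ds+W_t$ to write the elementary pathwise bound
\[
|Y_t|\le |y|+\max_{s\le T}|W_s|+\int_0^t L_1(1+|Y_s|)\,ds,
\]
applies Gronwall directly to $|Y_t|$, and then reduces the exponential moment to $\mathbb{E}\,e^{C\max_{s\le T}|W_s|}<\infty$, which follows from the reflection principle coordinate by coordinate. You instead apply It\^o to $\psi(z)=\sqrt{1+|z|^2}$ and replace the Brownian maximum by the running maximum of a martingale with $\langle M\rangle_T\le T$, controlling its exponential moment via the exponential‑supermartingale tail bound. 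Both reach the same estimate; the paper's version is shorter because the diffusion coefficient is the identity, while your version is more robust (it would go through unchanged for bounded, possibly state‑dependent, diffusion coefficients).
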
 

\begin{proof}
Note that under Assumption \ref{as} 

$$|Y_{t}| \leq |y| + \max_{t \in [0,T]}|W_{t}| + \iot L_{1}(1+ |Y_{s}|)ds, \quad t \in [0,T], y \in \mathbb{R}^{N}.$$
 
The Gronwall Lemma ensures that 

\begin{equation} \label{inmax}
|Y_{t}|\leq \left(L_{1}T+ |y|+\max_{t \in [0,T]}|W_{t}| \right)e^{L_{1}T},  \quad t \in [0,T], y \in \mathbb{R}^{N}.
\end{equation}

Using the fact that functions  $f, g, h$ are of linear growth condition it is sufficient to observe that
for any $C>0$
\begin{equation} \label{max}
\mathbb{E}e^{C \max_{t \in [0,T]}|W_{t}|} < + \infty.
\end{equation}
 
This easily follows from the fact that for the one dimensional Wiener process we have 

\[
\max_{t \in [0,T]}|W_{t}| \leq \max_{t \in [0,T]}W_{t} + \max_{t \in [0,T]}(-W_{t}) 
\]
and
\[
P(\max_{t \in [0,T]}W_{t} > m) = 2P(W_{T} > m).
\]
\end{proof}
 
 \begin{rem} \label{rem1} In the light of Proposition \ref{kappa} it is worth noticing that there exist a deterministic function $\kappa(t,n)$, $t \in [0,T]$, $n \in \mathbb{N}$,  continuous in $t$ and a sequence $p(T,n)$, $n \in \mathbb{N}$ that for all $\delta \in \mathcal{D} $ we have
 \begin{align}
  \mathbb{E}_{y,0} e^{\int_0^{t} \shd dk}\max \left\{|\sfd|,1\right\} &\leq \kappa(t,n),\label{kappant} \\  
   \mathbb{E}_{y,0} e^{\int_0^{T} \shd dk}\max \left\{|g(Y_{T})|,1\right\} & \leq p(T,n) \quad \text{for all} \quad y \in B(0,n). \label{kappant1}
\end{align}
 \end{rem}

 For infinite horizon control problems conditions \eqref{kappant}-\eqref{kappant1} are to weak. For such problems we need few more conditions, which are given in Assumption \ref{as2} below.
 
\begin{as} \label{as2} There exists a deterministic function $\kappa(t,n)$, $t>0$, $n \in \mathbb{N}$,  continuous in $t$ that for all $\delta \in \mathcal{D}$, $y \in \mathbb{R}$, $t \in [0,T]$, we have
\begin{gather*} 
\mathbb{E}_{y,0} e^{\int_0^{t} \shd dk}\max \left \{|\sfd|,1\right\} \leq \kappa(t,n), \quad \text{for all} \quad y \in B(0,n), \\
\int_{0}^{+\infty} \kappa(t,n) dt < + \infty, \quad \quad \int_{0}^{+\infty} e^{L_{2} t} \kappa(t,n) dt < + \infty,  
\end{gather*}
where the constant $L_{2}$ is taken from Assumption \ref{as} and $B(0,n)$ denotes the closed ball with the radius equals $n$.
\end{as}

\begin{rem}
Conditions of Assumption \ref{as2} are fulfilled for instance if $ \hd=\mu(y,\delta,\eta) - w$, $f$ is bounded, $\mu $ is continuous, bounded and $w>\sup_{y,\delta}\mu(y,\delta)$. Two further  examples might be deduced from results given below. 
\end{rem}

\begin{prop} \label{example} Assume that $N=1$ and the process $(Y_{t},\; t \in [0,T])$ is a strong solution to \eqref{ee1}, suppose further  there exist $\alpha,\beta, P, Q \geq 0, \; \alpha \neq 0$,  that $$\id\leq -\alpha y + \beta, \quad \hd \leq -P + Qy, \quad \delta \in D, \; \eta \in \Gamma, \; y  \in \mathbb{R}.$$
 Then for all $\delta \in \mathcal{D}$
 \[
\mathbb{E}_{y,0} e^{\int_0^{t} \shd dk} \leq e^{\frac{Qy^{+}}{\alpha}} e^{(-P + \frac{Q\beta}{\alpha} +   \frac{Q^{2}}{2\alpha^{2}}) t}, \qquad t>0, \;  y  \in \mathbb{R}.
\]
\end{prop}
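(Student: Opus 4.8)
The plan is to control the exponential moment $\mathbb{E}_{y,0} e^{\int_0^t h(Y_k,\delta_k)\,dk}$ by first dominating the integrand using the bound $h(y,\delta)\le -P+Qy$, then handling the resulting exponential of an integral of $Y_k$ via a pathwise comparison with a deterministic ODE and a Gaussian estimate. Concretely, since $Q\ge 0$ we have $\int_0^t h(Y_k,\delta_k)\,dk \le -Pt + Q\int_0^t Y_k\,dk$, so it suffices to estimate $\mathbb{E}_{y,0} \exp\bigl(Q\int_0^t Y_k\,dk\bigr)$ and then multiply by $e^{-Pt}$.

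The key step is to get a good pathwise upper bound on $Y_k$. From \eqref{ee1} and the drift bound $i(Y_k,\delta_k)\le -\alpha Y_k + \beta$ one expects $Y_t \le Z_t$, where $Z$ solves the \emph{linear} SDE $dZ_t = (-\alpha Z_t + \beta)\,dt + dW_t$ with $Z_0 = y$; this is a standard comparison theorem for one-dimensional SDEs (the diffusion coefficients agree and are constant, and the drift of $Y$ is dominated by the affine drift of $Z$, which is Lipschitz). The Ornstein--Uhlenbeck-type process $Z$ has the explicit representation $Z_t = y e^{-\alpha t} + \frac{\beta}{\alpha}(1-e^{-\alpha t}) + \int_0^t e^{-\alpha(t-s)}\,dW_s$, so $Z_t \le y^+ + \frac{\beta}{\alpha} + \int_0^t e^{-\alpha(t-s)}\,dW_s$ (using $\alpha>0$; one should double-check the degenerate sign conventions but $\alpha\neq 0$ together with the stated form of the conclusion makes clear $\alpha>0$ is intended). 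Hence $Q\int_0^t Y_k\,dk \le \frac{Qy^+}{\alpha}\cdot(\text{something})$ — more precisely I would integrate the bound on $Z_k$ over $k\in[0,t]$ and use $\int_0^t (1-e^{-\alpha k})\,dk \le t$ to get $Q\int_0^t Y_k\,dk \le \frac{Qy^+}{\alpha} + \frac{Q\beta}{\alpha}t + Q\int_0^t\!\!\int_0^k e^{-\alpha(k-s)}\,dW_s\,dk$.

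It then remains to compute the exponential moment of the Gaussian term $G_t := Q\int_0^t\int_0^k e^{-\alpha(k-s)}\,dW_s\,dk$. By the stochastic Fubini theorem $G_t = Q\int_0^t \bigl(\int_s^t e^{-\alpha(k-s)}\,dk\bigr)\,dW_s = \frac{Q}{\alpha}\int_0^t \bigl(1 - e^{-\alpha(t-s)}\bigr)\,dW_s$, a centered Gaussian with variance $\frac{Q^2}{\alpha^2}\int_0^t (1-e^{-\alpha(t-s)})^2\,ds \le \frac{Q^2}{\alpha^2}\,t$. Since for a centered Gaussian $\xi$ one has $\mathbb{E} e^{\xi} = e^{\operatorname{Var}(\xi)/2}$, we get $\mathbb{E} e^{G_t} \le e^{\frac{Q^2}{2\alpha^2}t}$. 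Collecting the deterministic factors $e^{-Pt}$, $e^{Qy^+/\alpha}$, $e^{Q\beta t/\alpha}$ and this last bound yields exactly the claimed inequality.

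The main obstacle I anticipate is making the comparison argument fully rigorous: the stated drift hypothesis only gives $i(y,\delta)\le -\alpha y+\beta$ as a one-sided bound (not Lipschitz control on $i$ from below), so one invokes a comparison theorem that requires only that the smaller process have Lipschitz drift — here $Z$ does — and equal diffusion coefficients; I would cite the standard one-dimensional SDE comparison theorem. A secondary technical point is justifying the use of stochastic Fubini to rewrite $G_t$, which is routine since the kernel $e^{-\alpha(k-s)}\mathbbm{1}_{\{s\le k\le t\}}$ is bounded on the compact time interval. Everything else is an explicit Gaussian computation.
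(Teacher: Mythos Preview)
Your proposal is correct and follows essentially the same route as the paper: bound $h$ by $-P+Qy$, obtain a pathwise upper bound $Y_t \le y e^{-\alpha t} + \tfrac{\beta}{\alpha}(1-e^{-\alpha t}) + \int_0^t e^{-\alpha(t-s)}\,dW_s$, integrate in $t$, apply stochastic Fubini to rewrite the double integral as a single Wiener integral, and finish with the Gaussian exponential moment formula.

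The only noteworthy difference is how the pathwise bound on $Y_t$ is obtained. You invoke a one-dimensional SDE comparison theorem with the auxiliary Ornstein--Uhlenbeck process $Z$; the paper instead applies It\^o's formula directly to $e^{\alpha t}Y_t$, uses the drift hypothesis $i(Y_t,\delta_t)\le -\alpha Y_t+\beta$ inside the resulting integral, and reads off the same inequality. The paper's approach is slightly more self-contained (no external comparison result needed, and no concern about one-sided Lipschitz conditions on $i$), whereas yours makes the structure --- domination by an explicit linear SDE --- more transparent. Either way the remaining computation is identical.
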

\begin{proof}
Note that 
$$dY_{t}=i(Y_{t},\delta_{t})dt +  dW_{t}$$
Using the Ito formula we have
\[
de^{\alpha t} Y_{t}= [\alpha e^{\alpha t} Y_{t}+ e^{\alpha t} \sid]dt+ e^{\alpha t} dW_{t} \leq   \beta e^{\alpha t} + e^{\alpha t} dW_{t},
\]
and further
\[
e^{\alpha t} Y_{t} = y + \int_{0}^{t} [\alpha e^{\alpha s} Y_{s}+ e^{\alpha s} i(Y_{s},\delta_{s}) ] ds + \int_{0}^{t} e^{\alpha s} dW_{s} \leq y  +  \int_{0}^{t}\beta e^{\alpha s} ds + \int_{0}^{t}e^{\alpha s} dW_{s}.
\]
This yields
\[
Y_{t} \leq ye^{-\alpha t} + \beta \frac{1}{\alpha}(1-e^{-\alpha t}) + \int_{0}^{t} e^{\alpha(s - t)} dW_{s}
\]
and 
\begin{multline*}
 \int_{0}^{t} Y_{s} \leq \frac{y^{+}}{\alpha}  + \frac{\beta}{\alpha} t + \int_{0}^{t} \int_{0}^{s} e^{\alpha(k - t)} dW_{k} ds =\frac{y^{+}}{\alpha}  + \frac{\beta}{\alpha} t \\+ \int_{0}^{t} \int_{s}^{t} e^{\alpha(k - t) } dk dW_{s}= \frac{y^{+}}{\alpha}  + \frac{\beta}{\alpha} t + \int_{0}^{t} \frac{1}{\alpha} \left(1-e^{\alpha(s-t)}\right) dW_{s}.
\end{multline*}
Therefore
\[
 \int_{0}^{t}\shd dk \leq -Pt + \frac{Qy^{+}}{\alpha}  + \frac{Q\beta}{\alpha} t +  Q \int_{0}^{t} \frac{1}{\alpha} \left(1-e^{\alpha(s-t)}\right) dW_{s}
\]
and consequently
\[
\mathbb{E}_{y,0} e^{\int_0^{t} h(Y_{k},\delta_{k}) dk}f(Y_{t},\delta_{t}) \leq e^{\frac{Qy^{+}}{\alpha}} e^{(-P + \frac{Q\beta}{\alpha} +   \frac{Q^{2}}{2\alpha^{2}}) t}.
\]
\end{proof}

It is not very hard to extend the above result to the case $y \in \mathbb{R}^{N}$. The second example is presented in the following proposition.

\begin{prop}
 If $\hd \leq-w$, $\fd \leq L_{1}(1+|y|)$ where $w>\max\{0,L_{2}\}$, then  for all  $\delta \in \mathcal{D}$
 \[
\mathbb{E}_{y,0} e^{\int_0^{t} h(Y_{k},\delta_{k}) dk}\shd\leq e^{-wt}(1+|y|e^{L_{2}t}),  \qquad y \in \mathbb{R}^{N}.
\]
\end{prop}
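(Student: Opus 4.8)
The plan is to decouple the two hypotheses: the bound on $h$ disposes of the exponential weight, and the bound on $f$ reduces the estimate to a first--moment bound for $Y_t$. Since $\hd\le-w$ for all $y$ and $\delta$, along any trajectory $\int_0^t\shd\,dk\le-wt$, so $e^{\int_0^t\shd\,dk}\le e^{-wt}$ almost surely; together with $\fd\le L_1(1+|y|)$ this gives
\[
\mathbb{E}_{y,0}\,e^{\int_0^t\shd\,dk}\,\sfd\ \le\ e^{-wt}\,L_1\bigl(1+\mathbb{E}_{y,0}|Y_t|\bigr).
\]
Thus it remains to show that $\mathbb{E}_{y,0}|Y_t|$ grows at most linearly in $|y|$ with exponential rate $L_2$, i.e.\ $\mathbb{E}_{y,0}|Y_t|\le|y|e^{L_2t}+(\text{lower-order term in }t)$; plugging this back in produces a bound of the claimed form.

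For that moment estimate I would use the one-sided Lipschitz condition \eqref{lipcond1} through a coupling argument. Let $\bar Y_t$ be the strong solution of \eqref{ee1} driven by the same Wiener process and the same control $\delta$, but started from the origin (it exists and is unique by Assumption \ref{as}). In the difference the Brownian part cancels, so $t\mapsto Y_t-\bar Y_t$ is absolutely continuous and
\[
\frac{d}{dt}|Y_t-\bar Y_t|^2=2(Y_t-\bar Y_t)\cdot\bigl[\sid-i(\bar Y_t,\delta_t)\bigr]\le 2L_2|Y_t-\bar Y_t|^2
\]
by \eqref{lipcond1}; Gronwall's inequality then yields $|Y_t-\bar Y_t|\le|y|e^{L_2t}$ pathwise, whence $\mathbb{E}_{y,0}|Y_t|\le|y|e^{L_2t}+\mathbb{E}|\bar Y_t|$. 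The residual term $\mathbb{E}|\bar Y_t|$ does not involve $y$ and is controlled by a routine Gronwall estimate: apply It\^o's formula either to $|\bar Y_t|^2$ or to the smooth surrogate $\phi_\varepsilon(y)=\sqrt{\varepsilon+|y|^2}$ of $|y|$, use $2y\cdot i(y,\delta)\le 2L_2|y|^2+c$ (a consequence of \eqref{lipcond1} with $\bar y=0$ and the continuity of $i$ on the compact set $D$), take expectations to delete the stochastic integral, and invoke Gronwall. Combining this with the first display finishes the argument.

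The main obstacle is precisely this moment estimate --- extracting the \emph{sharp} exponential rate $L_2$ from \eqref{lipcond1} (a naive Gronwall estimate on $\mathbb{E}|Y_t|^2$ alone would give a worse rate) and handling the non-differentiability of the Euclidean norm at the origin, which is why one passes through the coupling to $\bar Y_t$ and, for $\mathbb{E}|\bar Y_t|$, through the regularization $\phi_\varepsilon$. The standing hypothesis $w>\max\{0,L_2\}$ plays no role in the pointwise bound itself; it is exactly what makes $t\mapsto e^{-wt}(1+|y|e^{L_2t})$ (and its further product with $e^{L_2t}$) integrable on $[0,+\infty)$, which is the reason this proposition is phrased so as to supply the function $\kappa$ appearing in Assumption \ref{as2}.
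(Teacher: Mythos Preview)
Your argument is correct and follows the same overall skeleton as the paper (dispose of the exponential via $h\le -w$, then reduce to a first-moment bound on $Y_t$), but the way you obtain that moment bound is genuinely different. The paper applies It\^o's formula directly to $|Y_k|^2$, asserts
\[
\mathbb{E}_{y,0}|Y_k|^2\le |y|^2+2L_2\int_0^k\mathbb{E}_{y,0}|Y_l|^2\,dl,
\]
invokes Gronwall, and then Jensen to get $\mathbb{E}_{y,0}|Y_k|\le|y|e^{L_2k}$. Your route instead couples $Y_t$ with the solution $\bar Y_t$ started at the origin and uses the pathwise bound $|Y_t-\bar Y_t|\le|y|e^{L_2t}$ (which is exactly the paper's Lemma~\ref{basic}), leaving a $y$-independent remainder $\mathbb{E}|\bar Y_t|$ to be estimated separately. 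This is more honest: the paper's displayed Gronwall inequality silently drops the quadratic-variation term $N\,dk$ and the contribution of $i(0,\delta)$, so the clean estimate $\mathbb{E}_{y,0}|Y_k|\le|y|e^{L_2k}$ is not literally correct; your decomposition isolates precisely the missing piece. One small correction to your commentary: the ``naive'' second-moment approach, done carefully, does not give a worse exponential \emph{rate} --- the omitted terms only add a $y$-independent additive correction with the same rate $L_2$ --- so both methods ultimately yield bounds of the form $|y|e^{L_2t}+C(t)$ with $C$ independent of $y$, which is all that is needed for Assumption~\ref{as2}.
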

\begin{proof}
 Using the Ito formula we have 
$$ \mathbb{E}_{y,0} |Y_{k}|^{2} \leq |y|^{2} + 2L_{2} \int_{0}^{k} \mathbb{E}_{y,0} |Y_{l}|^{2} dl.
$$
Gronwall's lemma yields 
$$
\mathbb{E}_{y,0} |Y_{k}|^{2} \leq |y|^{2} e^{2L_{2}(k-s)}
$$
and finally
$$
\mathbb{E}_{y,0} |Y_{k}| \leq |y| e^{L_{2}k}.
$$
This  completes the proof.
\end{proof}
\section{Finite horizon problem}
We start with the HJB equation of the form
\begin{equation} \label{equationL}
 u_{t} + \frac{1}{2} \Delta u +\max_{\delta \in D} \biggl(i(y,\delta) \nabla u  +  \hd u+ \fd\biggr)=0, \quad y \in \mathbb{R}^{N},  t \in [0,T)
\end{equation}
with the terminal condition $u(y,T)=g(y)$. The symbol $i(y,\delta) \nabla u$ is used to note the dot product between the function $i$ and gradient of $u$.
It is already well known that under some mild conditions there exists a smooth solution to that equation, so we consider first the following form of the verification theorem.
\begin{prop} \label{lem1}
 Suppose that all conditions of Assumption \ref{as}  are  satisfied and  there exists $u$ - a  solution to  \eqref{equationL} together with $K_{T}, M_{T} >0$, such that $u \in \mathcal{C}^{2,1}(\mathbb{R}^{N} \times [0,T)) \cap \mathcal{C}(\mathbb{R}^{N} \times [0,T])$ and
 \begin{equation} \label{2ee}
 |u(y,t)|\leq K_{T} e^{M_{T}|y|}, \quad y \in \mathbb{R}^{N}, t \in [0,T].
 \end{equation}
  Then $u$  admits a stochastic representation of the form
\begin{align} \label{repstoch}
u(y,t)=  \sup_{\delta \in \mathcal{D}}  \mathbb{E}_{y,t} \biggl( \int_{t}^{T}e^{\int_{t}^{s}h(Y_{k},\delta_{k})\, dk} f(Y_{s},\delta_{s}) ds  + e^{\int_{t}^{T}h(Y_{k},\delta_{k})\, dk}g(Y_{T})\biggr),
\end{align} 
where $Y$ is the unique solution  to $dY_{t}=i(Y_{t},\delta_{t})dt +  dW_{t}$. Moreover, if $\delta^{*}(y,t)$ is a Borel measurable maximizer in equation \eqref{equationL} then it determines an optimal control process . 
\end{prop}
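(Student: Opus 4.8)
The plan is to apply the Itô formula to the process $s \mapsto e^{\int_{t}^{s} h(Y_k,\delta_k)\,dk}\, u(Y_s,s)$ along the solution $Y$ of the controlled SDE started at $(y,t)$, for an arbitrary admissible control $\delta \in \mathcal{D}$. Writing $\Phi_s := \exp\!\left(\int_{t}^{s} h(Y_k,\delta_k)\,dk\right)$, so that $d\Phi_s = \Phi_s\, h(Y_s,\delta_s)\,ds$, the product rule gives, for $t \le r < T$,
\begin{align*}
\Phi_r u(Y_r,r) = u(y,t) &+ \int_t^r \Phi_s\Bigl( u_t(Y_s,s) + \tfrac12 \Delta u(Y_s,s) + i(Y_s,\delta_s)\nabla u(Y_s,s) + h(Y_s,\delta_s) u(Y_s,s)\Bigr) ds \\
&+ \int_t^r \Phi_s \nabla u(Y_s,s)\, dW_s.
\end{align*}
Since $u$ solves \eqref{equationL}, the drift bracket is $\le -f(Y_s,\delta_s)$ pointwise, with equality when $\delta_s$ is replaced by a measurable maximizer $\delta^*(Y_s,s)$. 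Hence, after rearranging and taking expectations (once the stochastic integral is shown to be a true martingale), one gets
\[
u(y,t) \ge \mathbb{E}_{y,t}\Bigl( \Phi_r u(Y_r,r) + \int_t^r \Phi_s f(Y_s,\delta_s)\,ds\Bigr),
\]
with equality under $\delta^*$; letting $r \uparrow T$ and using the terminal condition $u(\cdot,T)=g$ together with dominated convergence yields both the inequality $u \ge J^\delta$ for every $\delta$ and the reverse one for $\delta^*$, which is exactly \eqref{repstoch}.

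Two technical points must be handled carefully. First, $u$ is only $\mathcal{C}^{2,1}$ on $\mathbb{R}^N\times[0,T)$ and merely continuous up to $t=T$, so the Itô expansion is legitimate on $[t,r]$ for $r<T$ but not directly on $[t,T]$; this is why I pass to the limit $r\uparrow T$ rather than applying Itô on the closed interval. To justify the limit I would use the bound \eqref{inmax} on $\sup_{s\le T}|Y_s|$, the linear growth of $g$, the exponential bound \eqref{2ee} on $u$, and the integrability furnished by Proposition \ref{kappa} (controlling $\mathbb{E}_{y,t}\Phi_s\max\{|f(Y_s,\delta_s)|,|g(Y_T)|,1\}$ and, via \eqref{max} applied to $\exp(M_T\sup|W|)$, also $\mathbb{E}_{y,t}\Phi_T e^{M_T|Y_T|}$), so dominated convergence applies to both $\Phi_r u(Y_r,r)\to \Phi_T g(Y_T)$ and $\int_t^r\Phi_s f\,ds\to\int_t^T\Phi_s f\,ds$.

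Second — and this is the main obstacle — one must verify that the local martingale $\int_t^{\cdot}\Phi_s\nabla u(Y_s,s)\,dW_s$ is a genuine martingale (or at least a supermartingale, which already suffices for the inequality $u\ge J^\delta$), so that its expectation vanishes. The clean way is a localization argument: stop at $\tau_n = \inf\{s: |Y_s|\ge n\}\wedge r$, where the integrand is bounded, take expectations to get the identity with $\tau_n$ in place of $r$, and then remove the localization by the same uniform-integrability estimates as above (here one uses that on $\{s\le\tau_n\}$ the process stays in a compact set where $\nabla u$ is continuous hence bounded, and that $\tau_n\uparrow r$ a.s. by \eqref{inmax}). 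For the optimal-control direction, one additionally needs that under the feedback $\delta_s^*=\delta^*(Y_s,s)$ the closed-loop SDE $dY_s = i(Y_s,\delta^*(Y_s,s))ds + dW_s$ admits a solution generating an admissible $\delta^*\in\mathcal{D}$; since $\delta^*$ is only Borel measurable, I would invoke existence of a weak solution (the drift is bounded on compacts and $Y$ has nondegenerate unit diffusion, so Stroock–Varadhan / Girsanov-type arguments apply), which is enough because the value in \eqref{repstoch} is defined as a supremum over $\mathcal{D}$ and the stochastic representation is law-dependent only. Combining the two directions gives the claimed equality and the optimality of $\delta^*$.
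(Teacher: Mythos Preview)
Your It\^o-plus-localization outline is correct and is exactly the ``standard verification reasoning'' the paper alludes to; the key integrability step is the same as the paper's, which simply records that \eqref{2ee}, \eqref{inmax} and \eqref{max} combine to give
\[
\mathbb{E}_{y,t}\sup_{t\le s\le T}e^{\int_t^s h(Y_k,\delta_k)\,dk}\,|u(Y_s,s)|<\infty,
\]
after which the stochastic integral is a true martingale and the passage $r\uparrow T$ is dominated.

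The one genuine gap is in the closed-loop direction. You propose to handle the feedback SDE $dY_s=i(Y_s,\delta^*(Y_s,s))\,ds+dW_s$ by invoking a \emph{weak} solution, arguing that ``the stochastic representation is law-dependent only.'' But the admissible class $\mathcal{D}$ in this paper consists of processes progressively measurable with respect to the \emph{given} Brownian filtration on the \emph{fixed} reference system $(\Omega,\mathcal{F},P)$; a weak solution lives on a possibly different filtered space and the resulting feedback control $\delta^*(Y_\cdot,\cdot)$ need not lie in $\mathcal{D}$. Hence a weak solution yields neither the inequality $u(y,t)\le\sup_{\delta\in\mathcal{D}}J^\delta(y,t)$ nor the stronger assertion that $\delta^*$ ``determines an optimal control process.'' The paper closes this gap by citing Veretennikov's theorem (as extended in McEneaney's thesis), which furnishes a \emph{strong} solution to $dY=b(Y)\,dt+dW$ for merely Borel drift $b$ and unit diffusion; then $Y$ is adapted to the Brownian filtration, $\delta^*(Y_\cdot,\cdot)\in\mathcal{D}$, and the equality in the It\^o expansion under $\delta^*$ delivers both the representation \eqref{repstoch} and the optimality of $\delta^*$. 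Without that strong-solution ingredient your argument does not conclude.
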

\begin{proof} 
We can combine \eqref{2ee}, \eqref{inmax}, \eqref{max} to ensure that for any $y \in \mathbb{R}^{N}$ and $\delta \in \mathcal{D}$, we have
\[
  \mathbb{E}_{y,t} \sup_{0 \leq s \leq T}e^{\int_{t}^{s}h(Y_{k},\delta_{k})\, dk} |u(Y_{s}^{\delta},s)| < +\infty. 
\]
Suppose further $\delta^{*}(y,t)$ is a Borel measurable maximizer of \eqref{equationL}. By the result of Veretennikov \cite{veretennikov} extended in McEneaney \cite[Lemma 3.2.1]{mce} there exists a strong solution to 
\[
dY_{t}=i(Y_{t},\delta^{*}(Y_{t}))dt + dW_{t}.
\]
In fact the result of McEneaney was proved under the assumption that the cofficient $L_{2}$ from our Assumption \ref{as} is strictly less than 0, but his crucial inequality 2.7 holds also if the function $i$ is Lipschitz continuous. 
These  facts enable us to use standard verification reasoning, which can be found in many classical textbooks.
\end{proof}

Let $u^{T}(y,t)$ denote the solution to \eqref{equationL} with terminal condition given at time T.

Now we are mainly interested in proving some estimates for $u^{T}(y,0)$ and its first $y$ - derivatives, which will be further used to apply the Arzell - Ascolli Lemma when passing to the suitable limit. Note that representation \eqref{repstoch} guarantees uniqueness in the class of functions satisfying \eqref{2ee} and this may lead us to the equality $u^{T-t}(y,0)=u^{T}(y,t)$ (assuming that both solutions exists).


We will need the following lemma
\begin{lem} \label{basic} Suppose that all conditions of Assumption 1 are satisfied,  then 
\begin{equation*} \label{basicinequality}
 |Y_{k}(y,s) - Y_{k}(\bar{y},s)|  \leq |y-\bar{y}| e^{L_{2}(k-s)}, \quad \text{ for all} \quad y, \bar{y} \in \mathbb{R}^{N}, k \geq s \geq 0.
\end{equation*}
\end{lem}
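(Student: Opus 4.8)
The plan is to prove the Lipschitz-type estimate for the flow of \eqref{ee1} by a pathwise argument using It\^o's formula applied to the squared difference of two solutions started from $y$ and $\bar{y}$ at the same time $s$, followed by a Gronwall-type comparison.

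First I would fix $y,\bar{y}\in\mathbb{R}^{N}$ and $s\geq 0$, write $Y_{k}=Y_{k}(y,s)$ and $\bar{Y}_{k}=Y_{k}(\bar{y},s)$, and set $Z_{k}=Y_{k}-\bar{Y}_{k}$. Since both processes are driven by the \emph{same} Wiener process and the diffusion coefficient is the identity, the stochastic integrals cancel and $Z$ satisfies the (pathwise, bounded-variation) equation $dZ_{k}=\bigl[i(Y_{k},\delta_{k})-i(\bar{Y}_{k},\delta_{k})\bigr]dk$ with $Z_{s}=y-\bar{y}$. Hence $k\mapsto |Z_{k}|^{2}$ is absolutely continuous and, for a.e.\ $k\geq s$,
\[
\frac{d}{dk}|Z_{k}|^{2}=2\,(Y_{k}-\bar{Y}_{k})\bigl[i(Y_{k},\delta_{k})-i(\bar{Y}_{k},\delta_{k})\bigr]\leq 2L_{2}|Z_{k}|^{2},
\]
where the inequality is exactly the monotonicity condition \eqref{lipcond1} from Assumption \ref{as}, applied with $\delta=\delta_{k}$. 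Note this step uses only \eqref{lipcond1}, not the Lipschitz bound on $i$; the Lipschitz bound is what guarantees a priori that both solutions are well defined and that $Z$ is finite, so there is no circularity.

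Next I would integrate this differential inequality. The function $\varphi(k):=|Z_{k}|^{2}$ satisfies $\varphi(s)=|y-\bar{y}|^{2}$ and $\varphi'(k)\leq 2L_{2}\varphi(k)$ on $[s,\infty)$, so Gronwall's lemma (in its deterministic, pathwise form) gives $\varphi(k)\leq |y-\bar{y}|^{2}e^{2L_{2}(k-s)}$ for all $k\geq s$. Taking square roots yields $|Y_{k}(y,s)-Y_{k}(\bar{y},s)|\leq |y-\bar{y}|e^{L_{2}(k-s)}$, which is the claim; since this holds for every realization of the path, it holds almost surely (and the statement can be read either pathwise or as an almost sure inequality).

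The argument is essentially routine, so there is no serious obstacle; the only point requiring a little care is the justification that $k\mapsto|Z_{k}|^{2}$ is differentiable a.e.\ with the stated derivative. Because the cancellation of the Brownian terms leaves $Z$ with sample paths of locally bounded variation (indeed absolutely continuous, with $\dot Z_{k}=i(Y_{k},\delta_{k})-i(\bar{Y}_{k},\delta_{k})$, a progressively measurable locally bounded process thanks to continuity of $i$ and compactness of $D$), the chain rule for absolutely continuous functions applies directly and no It\^o correction term appears. One could alternatively phrase the whole computation through It\^o's formula for $|Z_{k}|^{2}$ and simply observe that the quadratic-variation term vanishes; I would likely present it that way to match the style of the preceding propositions.
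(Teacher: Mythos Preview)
Your proposal is correct and follows essentially the same approach as the paper: apply It\^o's formula to $|Y_{k}(y,s)-Y_{k}(\bar y,s)|^{2}$, observe that the Brownian terms cancel so no martingale or quadratic-variation term appears, use the one-sided condition \eqref{lipcond1} to obtain the integral inequality, and conclude with Gronwall. The only cosmetic difference is that you phrase the step as a differential inequality for $\varphi(k)=|Z_{k}|^{2}$ while the paper writes it in integral form before invoking Gronwall.
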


\begin{proof}
Let $k \geq s$ be fixed. Using the Ito formula we have 
\begin{multline*}
 |Y_{k}(y,t) - Y_{k}(\bar{y},t)|^{2} \\= (y - \bar{y})^{2} +  \int_{t}^{k} 2(Y_{l}(y,t) - Y_{l}(\bar{y},t))[i(Y_{l}(y,t),\delta_{l})-i(Y_{l}(y,t),\delta_{l})] \, dl. 
\end{multline*}
Using \eqref{lipcond1} we get
$$ |Y_{k}(y,t) - Y_{k}(\bar{y},t)|^{2} \leq |y - \bar{y}|^{2} + 2L_{2} \int_{t}^{k}  |Y_{l}(y,t) - Y_{l}(\bar{y},t)|^{2} dl.
$$
Gronwall's lemma yields 
$$
 |Y_{k}(y,s) - Y_{k}(\bar{y},s)|^{2} \leq |y-\bar{y}|^{2} e^{2L_{2}(k-s)}.
$$
\end{proof}

\begin{prop} \label{lembounds}
 Suppose that all conditions of Assumption \ref{as} and  there exists $u^{T}$ - a bounded solution to  \eqref{equationL}. Moreover let the function $f$ be bounded and $h$ be bounded from above.  Then  for all $n \in \mathbb{N}$ the following estimates for $u^{T}$  are satisfied:
\begin{align}  
\left|u^{T}(y,0)\right| &\leq    \int_{0}^{T} \kappa(s,n)  ds +p(T,n), \notag  \\
\left|\nabla u^{T}(y,0)\right| & \leq  \left(L_{1}+\frac{L_{1}}{|L_{2}|}\right) \biggl( \int_{0}^{T} \max \{ 1, e^{L_{2}s}\} \kappa(s,n)  ds + \max \{ 1, e^{L_{2}T}\} p(T,n)  \biggr), \notag
\end{align}
where the function $\kappa(t,n)$ and $p(T,n)$ are taken from \eqref{kappant}-\eqref{kappant1}.
\end{prop}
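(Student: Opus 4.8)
The plan is to derive both bounds from the stochastic representation \eqref{repstoch} applied at $t=0$, combined with the Lipschitz estimates of Assumption \ref{as} and the flow estimate of Lemma \ref{basic}. Since $f$ is bounded and $h$ is bounded above, a bounded solution $u^{T}$ exists by hypothesis, and Proposition \ref{lem1} gives
\[
u^{T}(y,0)=\sup_{\delta\in\mathcal{D}}\mathbb{E}_{y,0}\biggl(\int_{0}^{T}e^{\int_{0}^{s}\shd dk}\ssfd\,ds+e^{\int_{0}^{T}\shd dk}\sgd\biggr).
\]
For the zeroth-order bound I would simply take absolute values inside the expectation, use $|\ssfd|\le\max\{|\ssfd|,1\}$ and $|\sgd|\le\max\{|\sgd|,1\}$, pull the supremum over $\delta$ through, and apply Remark \ref{rem1}: each term is dominated by $\kappa(s,n)$ (respectively $p(T,n)$) uniformly in $\delta$ for $y\in B(0,n)$. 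Integrating in $s$ yields $|u^{T}(y,0)|\le\int_{0}^{T}\kappa(s,n)\,ds+p(T,n)$. (One should note the mild subtlety that $|\sup a_\delta-\sup b_\delta|\le\sup|a_\delta-b_\delta|$, but here we only need $|u^{T}(y,0)|\le\sup_\delta\mathbb{E}_{y,0}|\cdots|$, which is immediate.)

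For the gradient bound the idea is to estimate the difference quotient $u^{T}(y,0)-u^{T}(\bar y,0)$ for $y,\bar y\in B(0,n)$ and then let $\bar y\to y$. Fix $\varepsilon>0$ and pick a control $\delta$ that is $\varepsilon$-optimal for the starting point $y$; using it (sub-optimally) for $\bar y$ gives
\[
u^{T}(y,0)-u^{T}(\bar y,0)\le\varepsilon+\mathbb{E}\biggl(\int_{0}^{T}\Bigl[e^{\int_{0}^{s}h(Y_{k}(y),\delta_k)dk}f(Y_{s}(y),\delta_s)-e^{\int_{0}^{s}h(Y_{k}(\bar y),\delta_k)dk}f(Y_{s}(\bar y),\delta_s)\Bigr]ds+(\text{terminal difference})\biggr),
\]
and symmetrically the reverse inequality. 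So it suffices to bound, for each fixed $\delta$, the expectation of the integrand difference. I would split each such difference as
\[
e^{A}f_1-e^{B}f_2=e^{A}(f_1-f_2)+(e^{A}-e^{B})f_2,
\]
where $A=\int_{0}^{s}h(Y_k(y),\delta_k)dk$, $B=\int_{0}^{s}h(Y_k(\bar y),\delta_k)dk$, $f_1=f(Y_s(y),\delta_s)$, $f_2=f(Y_s(\bar y),\delta_s)$. For the first piece, $|f_1-f_2|\le L_1|Y_s(y)-Y_s(\bar y)|\le L_1|y-\bar y|e^{L_2 s}$ by the Lipschitz bound and Lemma \ref{basic}; multiplying by $e^{A}$ and taking expectation gives a factor $\mathbb{E}\,e^{A}$ which is $\le\kappa(s,n)$ (for the relevant ball, after enlarging $n$ so that the whole flow from $B(0,n)$ stays controlled — this is where one uses that $\kappa$ already dominates $e^{\int h}\max\{|f|,1\}\ge e^{\int h}$). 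For the second piece, use $|e^{A}-e^{B}|\le\max\{e^{A},e^{B}\}\,|A-B|$ together with $|A-B|\le\int_{0}^{s}|h(Y_k(y),\delta_k)-h(Y_k(\bar y),\delta_k)|dk\le L_1|y-\bar y|\int_{0}^{s}e^{L_2 k}dk\le\frac{L_1}{|L_2|}|y-\bar y|\max\{1,e^{L_2 s}\}$; then $\max\{e^{A},e^{B}\}|f_2|$ is again absorbed into $\kappa(s,n)$. Collecting the two pieces produces the constant $L_1+L_1/|L_2|$ and the weight $\max\{1,e^{L_2 s}\}$ inside the $s$-integral, with the terminal term handled identically using the Lipschitz bound on $g$ and $p(T,n)$. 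Letting $\varepsilon\to0$ and then dividing by $|y-\bar y|$ and sending $\bar y\to y$ gives the claimed gradient estimate.

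The main obstacle I anticipate is bookkeeping around the localization to balls $B(0,n)$: when comparing the flows started from $y$ and $\bar y$ one must ensure that the dominating functions $\kappa$ and $p$ apply along the entire trajectory, not just at the starting point, and that the constant is genuinely uniform in $\delta$. Concretely, in the term $\mathbb{E}\,[\,e^{A}\cdot(\text{Lipschitz factor})\,]$ the Lipschitz factor $|y-\bar y|e^{L_2 s}$ is deterministic, so the stochastic part is just $\mathbb{E}\,e^{A}\le\mathbb{E}\,e^{A}\max\{|f(Y_s(y),\delta_s)|,1\}\le\kappa(s,n)$, which is clean; but in the term with $\max\{e^A,e^B\}|f_2|$ one must note $\mathbb{E}\max\{e^A,e^B\}|f_2|\le\mathbb{E}\,e^A|f_2|+\mathbb{E}\,e^B|f_2|$ and that $\mathbb{E}\,e^A|f(Y_s(\bar y),\delta_s)|$ needs a bound on the flow from $\bar y$ weighted by the $h$-integral along the flow from $y$ — a mild mismatch. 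This is resolved because, for $y,\bar y$ in a fixed ball, one can crudely dominate using the same argument as in Proposition \ref{kappa} (linear growth of $|Y_s|$ in terms of $|y|+\max_t|W_t|$ and boundedness of $h$ from above), so after possibly enlarging $n$ the quantity is still $\le\kappa(s,n)$; alternatively, since $h$ is bounded above by some constant, $e^{A}\le e^{c^+ s}$ is deterministic and the whole issue evaporates, at the cost of a slightly different but still integrable dominating function, which one then relabels as $\kappa$. I would present the clean version using the boundedness of $h$ from above to keep the exponential factors under deterministic control wherever the mismatch would otherwise appear.
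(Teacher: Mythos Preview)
Your approach is essentially the paper's: both arguments invoke the stochastic representation from Proposition~\ref{lem1}, obtain the zeroth-order bound directly from Remark~\ref{rem1}, and prove the gradient bound by estimating $|u^{T}(y,0)-u^{T}(\bar y,0)|$ via the same splitting $e^{A}f_{1}-e^{B}f_{2}=e^{A}(f_{1}-f_{2})+(e^{A}-e^{B})f_{2}$ together with Lemma~\ref{basic}.

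The one place where the paper is cleaner is precisely the ``mismatch'' you flag. Instead of bounding $|e^{A}-e^{B}|\le\max\{e^{A},e^{B}\}\,|A-B|$ and then struggling with the cross term $\mathbb{E}\bigl[e^{A}|f(Y_{s}(\bar y),\delta_{s})|\bigr]$, the paper observes that both $A$ and $B$ are at most $B+|A-B|\le B+L_{1}|y-\bar y|\int_{0}^{s}e^{L_{2}k}\,dk$, and therefore uses
\[
|e^{A}-e^{B}|\le e^{B}\cdot e^{L_{1}|y-\bar y|\int_{0}^{s}e^{L_{2}k}\,dk}\cdot|A-B|.
\]
The extra factor $e^{L_{1}|y-\bar y|\int_{0}^{s}e^{L_{2}k}\,dk}$ is deterministic and tends to $1$ as $\bar y\to y$, while $e^{B}|f(Y_{s}(\bar y),\delta_{s})|$ is \emph{exactly} the quantity dominated by $\kappa(s,n)$ for $\bar y\in B(0,n)$---no mismatch at all. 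Your two proposed workarounds are less satisfactory: ``enlarging $n$'' would yield $\kappa(s,n')$ rather than the stated $\kappa(s,n)$, and replacing $e^{A}$ by the deterministic bound $e^{c^{+}s}$ coming from $h\le c^{+}$ produces a different (and in later applications worse) dominating function than the $\kappa$ of Remark~\ref{rem1}. The paper's device costs nothing and delivers the bound exactly as stated.
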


\begin{proof}
Let's fix $y \in B(0,n)$
Proposition \ref{lem1} ensures that 
$$u^{T}(y,0)= \sup_{\delta \in \mathcal{D}
} \mathbb{E}_{y,0} \biggl( \int_{0}^{T}e^{\int_{0}^{s}h(Y_{k},\delta_{k})\, dk} f(Y_{s},\delta_{s})  ds +e^{\int_{0}^{T}h(Y_{k},\delta_{k})\, dk}\sgd \biggr).
$$ 

From Remark \ref{rem1} we get
\begin{align*}
|u^{T}(y,0)| &\leq \int_{0}^{T} \kappa(s,n)ds + p(T,n). 
\end{align*} 

The bound for $\nabla u$  will be obtained by estimating the Lipschitz constant.   For a notational convenience we will write $\mathbb{E}f(Y_{t}(y,s))$ instead of  $\mathbb{E}_{y,s}f(Y_{t})$.

\begin{align*}
 |&u^{T}(y,0)-u^{T}(\bar{y},0)|  \\
\quad &\begin{aligned}[t]&\leq  \begin{aligned}[t]  \sup_{\delta \in \mathcal{D}}  \mathbb{E} \int_{0}^{T} \left|f(Y_{s}(\bar{y},0),\delta_{s})  \right|  \biggl| e^{\int_{0}^{s} h(Y_{k}(y,0),\delta_{k})\, dk} -   e^{\int_{0}^{s} h(Y_{k}(\bar{y},t),\delta_{k})\, dk} \biggr| ds
\end{aligned} \\
& \quad +  \sup_{\delta \in \mathcal{D}}  \mathbb{E} \int_{0}^{T} e^{\int_{0}^{s} h(Y_{k}(y,0),\delta_{k})\, dk}\biggl|f(Y_{s}(y,0),\delta_{s}) -f(Y_{s}(\bar{y},0),\delta_s)  \biggr| ds\\
& \quad +  \sup_{\delta \in \mathcal{D}}  \mathbb{E} \left|g(Y_{T}(\bar{y},0))  \right|\biggl| e^{\int_{0}^{T} h(Y_{k}(y,0),\delta_{k})\, dk} -   e^{\int_{0}^{T} h(Y_{k}(\bar{y},t),\delta_{k})\, dk} \biggr| \\
& \quad +  \sup_{\delta \in \mathcal{D}}  \mathbb{E}  e^{\int_{0}^{T} h(Y_{k}(y,0),\delta_{k})\, dk}\biggl|g(Y_{T}(\bar{y},0)) -g(Y_{T}(y,0))  \biggr|.
\end{aligned}
\end{align*}

Note that Assumption 1 is satisfied, hence 
\begin{align*}
 \int_{0}^{s} [h(Y_{k}(y,t),\delta_{k})-h(Y_{k}(\bar{y},0),\delta_{k})]\, dk \leq L_{1} &\int_{0}^{s} |Y_{k}(y,0) - Y_{k}(\bar{y},0)| dk \\ &\leq  L_{1} |y - \bar{y}|\int_{0}^{s} e^{L_{2} k} dk 
\end{align*} 

and consequently
\begin{align*}
\int_{0}^{s} h(Y_{k}(y,t),\delta_{k},\eta(\delta_{k})) dk &\leq \int_{0}^{s} h(Y_{k}(\bar{y},0),\delta_{k},\eta(\delta_{k})) dk + L_{1} |y - \bar{y}|\int_{0}^{s} e^{L_{2} k} dk. 
\end{align*}

Which means that 
\begin{multline*}
 \left|f(Y_{s}(\bar{y},0),\delta_{s})  \right| \biggl| e^{\int_{0}^{s} h(Y_{k}(y,0),\delta_{k})\, dk} -   e^{\int_{0}^{s} h(Y_{k}(\bar{y},0),\delta_{k})\, dk} \biggr| \leq \\ \left|f(Y_{s}(\bar{y},0),\delta_{s})\right|e^{\int_{0}^{s} h(Y_{k}(\bar{y},t),\delta_{k}) dk + L_{1} |y - \bar{y}|\int_{0}^{s} e^{L_{2} k} dk} \cdot \\  \cdot  \int_{0}^{s} |h(Y_{k}(y,t),\delta_{k})-h(Y_{k}(\bar{y},t),\delta_{k})|\, dk \\ \leq L_{1} |y-\bar{y}| \left|f(Y_{s}(\bar{y},0),\delta_{s})  \right|e^{\int_{0}^{s} h(Y_{k}(\bar{y},t),\delta_{k}) dk + L_{1} |y - \bar{y}|\int_{0}^{s} e^{L_{2} k} dk}  \int_{0}^{s} e^{L_{2} k} dk \\
 \leq \frac{L_{1}}{|L_{2}|} \max \{ 1, e^{L_{2}s}\}|y-\bar{y}| \left|f(Y_{s}(\bar{y},0),\delta_{s})  \right|e^{\int_{0}^{s} h(Y_{k}(\bar{y},t),\delta_{k}) dk + L_{1} |y - \bar{y}|\int_{0}^{s} e^{L_{2} k} dk}, 
\end{multline*}
where in the first inequality we use the fact that for any $x,y\leq a$, $|e^x-e^y| \leq e^{a} |x-y|$. 
The same reasoning might be used  to obtain 

\begin{multline*}
\left |g(Y_{T}(\bar{y},0)) \right| \biggl| e^{\int_{0}^{T} h(Y_{k}(y,0),\delta_{k})\, dk} -   e^{\int_{0}^{T} h(Y_{k}(\bar{y},0),\delta_{k})\, dk} \biggr| \\ \leq \frac{L_{1}}{|L_{2}|} \max \{ 1, e^{L_{2}T}\}|y-\bar{y}| \left| g(Y_{T}(\bar{y},0))\right|  |e^{\int_{0}^{T} h(Y_{k}(\bar{y},t),\delta_{k}) dk + L_{1} |y - \bar{y}|\int_{0}^{T} e^{L_{2} k} dk},
\end{multline*}

\begin{equation*}
 e^{\int_{0}^{T} h(Y_{k}(y,0),\delta_{k})\, dk}\left|g(Y_{T}(y,0)) -g(Y_{T}(\bar{y},0))  \right| ds \leq L_{1}   |y - \bar{y}|   e^{L_{2}T}  e^{\int_{0}^{T} h(Y_{k}(y,0),\delta_{k})\, dk} ,
\end{equation*}

\begin{equation*}
 e^{\int_{0}^{s} h(Y_{k}(y,0),\delta_{k})\, dk}\biggl|f(Y_{s}(y,0),\delta_{s}) -f(Y_{s}(\bar{y},0),\delta_{s})  \biggr| ds 
\leq L_{1}   |y - \bar{y}|   e^{L_{2}s} e^{\int_{0}^{s} h(Y_{k}(y,0),\delta_{k})\, dk} .
\end{equation*}

Now we can use Assumption \ref{as2} to summarize all inequalities into
\begin{multline*}
|u^{T}(y,0)-u^{T}(\bar{y},0)| \\ 
\leq  \left(L_{1}+\frac{L_{1}}{|L_{2}|}\right)|y-\bar{y}|\mathbb{E}\int_{0}^{T}\max \{ 1, e^{L_{2}s}\}\left|f(Y_{s}(\bar{y},0),\delta_{s})  \right| e^{\int_{0}^{s} h(Y_{k}(\bar{y},t),\delta_{k},\eta(\delta_{k})) ds + L_{1} |y - \bar{y}|\int_{0}^{s} e^{L_{2} k} dk} ds \\
 \leq  \left(L_{1}+\frac{L_{1}}{|L_{2}|}\right)   |y-\bar{y}|\int_{0}^{T} \max \{ 1, e^{L_{2}s}\}  e^{ L_{1} |y - \bar{y}|\int_{0}^{s} e^{L_{2} k} dk}\kappa(s,n)ds  \\
 + \max \{ 1, e^{L_{2}T}\}  e^{ L_{1} |y - \bar{y}|\int_{0}^{T} e^{L_{2} k} dk}p(T,n),
\end{multline*}

which guarantees that 
\[
\left|\nabla u^{T}(y,0)\right|  \leq  \left(L_{1}+\frac{L_{1}}{|L_{2}|}\right) \biggl( \int_{0}^{T} \max \{ 1, e^{L_{2}s}\} \kappa(s,n)  ds + \max \{ 1, e^{L_{2}T}\} p(T,n)  \biggr).
\]

\end{proof}

%

Now we are ready to prove that the value function is a smooth solution to  \eqref{equationL} also for unbounded functions $f$ and $h$.

\begin{lem} \label{tofriedman}
 Suppose that all conditions of  Assumption \ref{as} are satisfied and let the function $f$ be bounded and $h$ be bounded from above. Then the function \\ $H(y,u,p)=\max_{\delta \in D} \left(\id p  +  \hd u + \fd\right) $ is Lipschitz continuous on compact subsets on $\mathbb{R}^{N} \times  \mathbb{R} \times  \mathbb{R}^{N}$   and  there exists $K>0$ that 
\begin{align}
&|H(y,0,0)| \leq K, \qquad y \in \mathbb{R}^{N} \notag\\
&H(y,u,p)-H(y,\bar{u},p) \leq K \left( u-\bar{u}\right), \qquad \text{if} \quad u > \bar{u}, \; y \in  \mathbb{R}^{N}, p \in  \mathbb{R}^{N} \label{uu} \\
&|H(y,u,p)-H(\bar{y},u,p)| \leq K (1+|p|)|y-\bar{y}|,  \qquad u \in \mathbb{R}, \; y,\bar{y}, p \in  \mathbb{R}^{N} \notag \\
&|H(y,u,p)-H(y,u,\bar{p})| \leq K (1+|y|)|p-\bar{p}| \qquad u \in \mathbb{R}, \; y, p, \bar{p} \in  \mathbb{R}^{N} .  \notag
\end{align}

\end{lem}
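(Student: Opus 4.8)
The plan is to obtain all of the listed properties by reading them off the formula $H(y,u,p)=\max_{\delta\in D}\big(i(y,\delta)p+h(y,\delta)u+f(y,\delta)\big)$, using only three ingredients. First, since $D$ is compact and $i$ is continuous, $C_{0}:=\sup_{\delta\in D}|i(0,\delta)|<\infty$, so by the $L_{1}$-Lipschitz bound of Assumption \ref{as} we get $|i(y,\delta)|\le C_{0}+L_{1}|y|$ for every $y\in\mathbb{R}^{N}$, $\delta\in D$; in particular $i(\cdot,\delta)$ has linear growth uniformly in $\delta$. Second, $f$ is bounded, say $|f(y,\delta)|\le C_{f}$, and $h$ is bounded from above, say $h(y,\delta)\le M$, so $M^{+}:=\max\{M,0\}$ is finite. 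Third, I will repeatedly use the elementary inequalities $\big|\sup_{\alpha}a_{\alpha}-\sup_{\alpha}b_{\alpha}\big|\le\sup_{\alpha}|a_{\alpha}-b_{\alpha}|$ and $\sup_{\alpha}a_{\alpha}-\sup_{\alpha}b_{\alpha}\le\sup_{\alpha}(a_{\alpha}-b_{\alpha})$, valid here because the relevant suprema over $D$ are attained (compactness plus continuity in $\delta$).

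With these in hand each estimate is immediate. For the first, $H(y,0,0)=\max_{\delta\in D}f(y,\delta)$, hence $|H(y,0,0)|\le C_{f}$. For the monotonicity in $u$, if $u>\bar u$ then $H(y,u,p)-H(y,\bar u,p)\le\max_{\delta\in D}h(y,\delta)(u-\bar u)\le M^{+}(u-\bar u)$. For the Lipschitz bound in $p$, $|H(y,u,p)-H(y,u,\bar p)|\le\max_{\delta\in D}|i(y,\delta)|\,|p-\bar p|\le(C_{0}+L_{1}|y|)|p-\bar p|\le\max\{C_{0},L_{1}\}\,(1+|y|)|p-\bar p|$. For the Lipschitz bound in $y$, combining the $\sup$-difference inequality with the three Lipschitz estimates of Assumption \ref{as} gives $|H(y,u,p)-H(\bar y,u,p)|\le\max_{\delta\in D}\big(|i(y,\delta)-i(\bar y,\delta)|\,|p|+|h(y,\delta)-h(\bar y,\delta)|\,|u|+|f(y,\delta)-f(\bar y,\delta)|\big)\le L_{1}\big(1+|u|+|p|\big)|y-\bar y|$. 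The same three estimates together also give joint Lipschitz continuity of $H$ on any compact subset of $\mathbb{R}^{N}\times\mathbb{R}\times\mathbb{R}^{N}$. One then takes $K$ to be the maximum of the finitely many constants $C_{f}$, $M^{+}$, $\max\{C_{0},L_{1}\}$, $L_{1}$ produced above.

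I do not expect a genuine obstacle here; the only points that need a little care are (a) deducing the linear growth of $\delta\mapsto i(\cdot,\delta)$ from compactness of $D$ rather than postulating it, and (b) the appearance of $|u|$ in the $y$-Lipschitz estimate, which forces the constant in that particular bound to depend on the range of $u$. This is harmless: it is precisely the content of the "Lipschitz on compact subsets" assertion, and the Friedman-type existence argument to follow is carried out with a priori bounded solutions (cf. Proposition \ref{lembounds}), on which the constant becomes uniform.
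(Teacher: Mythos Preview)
Your proof is correct and follows essentially the same approach as the paper's: define the integrand $G(\delta,y,u,p)$ and repeatedly invoke the inequality $|\max_{\delta}G-\max_{\delta}G'|\le\max_{\delta}|G-G'|$ (together with its one-sided version for the $u$-monotonicity). The paper's own argument is terser---it works out only the one-sided estimate in $u$ explicitly and dismisses the remaining bounds in a single sentence---whereas you spell out all four, and you are right to flag that the $y$-Lipschitz bound unavoidably carries an $|u|$ factor; this is exactly the ``Lipschitz on compact subsets'' clause and is what Friedman's existence theorem actually uses once the solution is a priori bounded.
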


\begin{proof}
Let's define
\[
G(\delta,y,u,p)=\id p  +  \hd u + \fd.
\]
We start with the proof of inequality \eqref{uu}. Observe that we have
\[
\max_{\delta \in D} G(y,u,p,\delta)-\max_{\delta \in D} G(y,\bar{u},p,\delta) \leq  \max_{\delta \in D}  (G(y,u,p,\delta)-G(y,\bar{u},p,\delta)).
\]
and
\[
G(y,u,p,\delta)-G(y,\bar{u},p,\delta) = \hd(u - \bar{u}) \leq h^{+}(y,\delta)(u - \bar{u}), 
\]
which gives us the desired result.

For the rest of the proof it is sufficient to note that $G$ is continuous and
$$ |\max_{\delta \in D} G(y,u,p,\delta)-\max_{\delta \in D} G(\bar{y},\bar{u},\bar{p},\delta)| \leq  \max_{\delta \in D}  | G(y,u,p,\delta)-G(\bar{y},\bar{u},\bar{p},\delta)|.$$

\end{proof}

\begin{prop} \label{unbounded}
Suppose that all conditions of Assumption \ref{as}  are  satisfied. Then 
 \begin{equation*} 
u(y,t)=  \sup_{\delta \in \mathcal{D}}  \mathbb{E}_{y,t} \biggl( \int_{t}^{T}e^{\int_{0}^{s}h(Y_{k},\delta_{k})\, dk} f(Y_{s},\delta_{s}) ds \biggr)
\end{equation*} 
is a smooth ($\mathcal{C}^{2,1}(\mathbb{R}^{N}\times [0,T)) \cap \mathcal{C}(\mathbb{R}^{N}\times [0,T]) $) solution to 
\[ 
\begin{cases}
u_{t}+\frac{1}{2} \Delta u + \max_{\delta \in D} \biggl(\id \nabla u  +  \hd u+ \fd\biggr)=0, \quad  t \in [0,T), \; y \in \mathbb{R}^{N} \\
u(y,T)=g(y).
\end{cases}
\]
Moreover for all $t \in [0,T]$ and $y \in \mathbb{R}^{N}$, we have
\begin{align*}  
\left|u(y,t)\right| &\leq    \int_{0}^{T-t} \kappa(s,n)  ds +p(T-t,n), \\
\left|\nabla u(y,t)\right| & \leq  \left(L_{1}+\frac{L_{1}}{|L_{2}|}\right) \biggl( \int_{0}^{T-t} \max \{ 1, e^{L_{2}s}\} \kappa(s,n)  ds + \max \{ 1, e^{L_{2}(T-t)}\} p(T-t,n)  \biggr).
\end{align*}
\end{prop}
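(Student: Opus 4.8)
The plan is to obtain the unbounded case as a limit of the bounded case treated in Propositions \ref{lembounds} and \ref{lem1}, using the uniform gradient and sup-norm estimates there to extract a convergent subsequence via Arzel\`a--Ascoli. First I would introduce truncations: for $m \in \mathbb{N}$ set $f_{m}=(f \wedge m) \vee (-m)$ and $h_{m}=h \wedge m$ (truncating $h$ only from above, which is all that is needed), leaving $g$ and $i$ unchanged. These truncated data still satisfy Assumption \ref{as} with the same constants $L_{1},L_{2}$ — the Lipschitz constant of a truncation does not increase — and $f_{m}$ is bounded while $h_{m}$ is bounded from above. Standard parabolic theory (this is the ``already well known'' existence cited before Proposition \ref{lem1}; one may invoke Ladyzhenskaya--Solonnikov--Ural'tseva or Friedman \cite{friedman} after the usual localization, using that the Hamiltonian is Lipschitz on compacts by Lemma \ref{tofriedman}) gives a bounded classical solution $u_{m} \in \ctwo$ to \eqref{equationL} with data $(f_{m},h_{m},g)$ and terminal condition $g$. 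By Proposition \ref{lem1} each $u_{m}$ has the stochastic representation \eqref{repstoch} with $f,h$ replaced by $f_{m},h_{m}$.

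Next I would apply Proposition \ref{lembounds} to each $u_{m}$. The point is that the bounds there are phrased through $\kappa(s,n)$ and $p(T,n)$ from Remark \ref{rem1}, and those quantities dominate $\mathbb{E}_{y,0}e^{\int_{0}^{t}h_{m}(Y_{k},\delta_{k})dk}\max\{|f_{m}(Y_{t},\delta_{t})|,1\}$ and the corresponding $g$-term \emph{uniformly in $m$}, because $h_{m} \leq h$ and $|f_{m}| \leq |f|$ pointwise, so the estimates of Proposition \ref{kappa}/Remark \ref{rem1} for the original $f,h$ are upper bounds for all the truncated problems. Hence, on each ball $B(0,n)$, the family $\{u_{m}(\cdot,0)\}_{m}$ is uniformly bounded and uniformly Lipschitz, with constants independent of $m$. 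Doing this on $[0,T']$ for every $T' \le T$ (using $u_{m}^{T-t}(y,0)=u_{m}^{T}(y,t)$, the time-shift identity noted in the excerpt, so the same estimates hold at every time $t$), Arzel\`a--Ascoli yields a subsequence $u_{m_{j}}$ converging locally uniformly on $\mathbb{R}^{N}\times[0,T]$ to some continuous limit $u$, which by passing to the limit in \eqref{repstoch} — here dominated convergence is justified by the same $\kappa,p$ bounds together with monotone convergence $h_{m}\uparrow h$, $f_{m}\to f$ — is exactly the claimed value function; in particular $u$ inherits the two displayed estimates.

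It remains to upgrade the convergence from $\mathcal{C}^{0}$ to $\mathcal{C}^{2,1}$ and to conclude that the limit solves the \emph{untruncated} HJB equation. For this I would fix a cylinder $Q = B(0,n)\times(t_{1},t_{2}) \Subset \mathbb{R}^{N}\times[0,T)$ and use interior parabolic Schauder / $L^{p}$ estimates: on $Q$ the functions $u_{m_{j}}$ are uniformly bounded, and by Lemma \ref{tofriedman} the Hamiltonian $H(y,u_{m_{j}},\nabla u_{m_{j}})$ is controlled by $K(1+|y|)(1+|\nabla u_{m_{j}}|)$, so from the equation $\partial_{t}u_{m_{j}}+\tfrac12\Delta u_{m_{j}} = -H(y,u_{m_{j}},\nabla u_{m_{j}})$ one bootstraps: $W^{2,1}_{p}$ bounds give $\mathcal{C}^{1+\alpha}$ bounds on $\nabla u_{m_{j}}$ (using also the uniform Lipschitz bound in $y$ already in hand), hence the right-hand sides $H(y,u_{m_{j}},\nabla u_{m_{j}})$ are uniformly $\mathcal{C}^{\alpha}$ on slightly smaller cylinders, and then interior Schauder estimates give uniform $\mathcal{C}^{2+\alpha,1+\alpha/2}$ bounds. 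A further subsequence then converges in $\mathcal{C}^{2,1}_{\mathrm{loc}}$, and since $f_{m}\to f$, $h_{m}\to h$ pointwise (indeed locally uniformly, being continuous and monotone limits on compacts), one may pass to the limit termwise in the equation — the $\max_{\delta\in D}$ is continuous in the data uniformly on compacts — so $u$ solves the untruncated HJB equation classically, with terminal value $g$ by the uniform-up-to-$t=T$ continuity. The main obstacle I expect is precisely this interior regularity bootstrap: one must be careful that the $m$-independence of the Schauder/$L^{p}$ constants only relies on the $m$-uniform $\mathcal{C}^{0}$ and Lipschitz bounds and on the linear-growth-in-$(y,p)$ control of the Hamiltonian from Lemma \ref{tofriedman}, and that the growth in $y$ is harmless because everything is done on fixed compact cylinders; the passage to the limit and the value-function identification are then routine given Propositions \ref{kappa}, \ref{lem1}, \ref{lembounds}.
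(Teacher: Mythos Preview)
Your overall strategy --- approximate by problems with bounded data, derive $m$-independent estimates from Propositions \ref{lem1} and \ref{lembounds}, pass to a subsequential limit via Arzel\`a--Ascoli together with an interior Schauder bootstrap, and identify the limit through the stochastic representation --- is exactly the route the paper takes. Your value-truncation $f_{m}=(f\wedge m)\vee(-m)$, $h_{m}=h\wedge m$ is in fact a slightly cleaner variant of the paper's spatial cut-off (which interpolates linearly between $|y|\le k$ and $|y|\ge 2k$ and therefore only recovers an approximate Lipschitz constant $L_{1}^{k}=2L_{1}(1+1/k)$), so the final gradient estimate emerges with the correct constant without an additional limit.

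There is, however, one genuine gap: you leave $g$ untruncated. Under Assumption \ref{as} the terminal datum $g$ is merely Lipschitz, hence in general of linear growth, so the Cauchy problem with data $(f_{m},h_{m},g)$ cannot possess a \emph{bounded} solution --- the terminal slice $u_{m}(\cdot,T)=g$ is already unbounded. This breaks both steps you invoke: Friedman's existence result (as filtered through Lemma \ref{tofriedman}) yields bounded solutions only from bounded terminal data, and Proposition \ref{lembounds} is stated for a bounded $u^{T}$. The repair is exactly what the paper does: truncate $g$ as well (in your scheme, $g_{m}=(g\wedge m)\vee(-m)$; the Lipschitz constant is preserved and $|g_{m}|\le|g|$, so the same $\kappa$ and $p$ from Remark \ref{rem1} dominate uniformly in $m$). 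With that correction the remainder of your argument --- the $m$-uniform bounds, the interior regularity upgrade, and the identification of the limit with the value function --- goes through as written, with only the minor additional check that $u_{m}(\cdot,T)=g_{m}\to g$ survives the locally uniform passage to the limit up to $t=T$.
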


\begin{proof}
Let's define the following sequences of functions:
$$
h_{k}(y,\delta)=\begin{cases}
\hd \quad  & \text{if} \quad |y| \leq k,\\
 h^{+}(y,\delta)\left(2-\frac{|y|}{k}\right)-h^{-}(y,\delta), & \text{if} \quad k \leq |y| \leq 2k,\\
  -h^{-}(y,\delta)       &\text{if} \quad  |y| \geq 2k,  \end{cases}
$$ 

$$
f_{k}(y,\delta)=\begin{cases}
\fd \quad  & \text{if} \quad |y| \leq k,\\
 \fd\left(2-\frac{|y|}{k}\right), & \text{if} \quad k \leq |y| \leq 2k,\\
  0        &\text{if} \quad  |y| \geq 2k,  \end{cases}
$$ 

$$
g_{k}(y)=\begin{cases}
g(y) \quad  & \text{if} \quad |y| \leq k,\\
 g(y)\left(2-\frac{|y|}{k}\right), & \text{if} \quad k \leq |y| \leq 2k,\\
  0        &\text{if} \quad  |y| \geq 2k.  \end{cases}
$$

Note that 
\begin{gather*} 
\lim_{k \to \infty}  h_{k}(y,\delta)= \hd, \quad \lim_{k \to \infty}  f_{k}(y,\delta)= \fd, \quad \lim_{k \to \infty}  g_{k}(y)= g(y), \\
h_{k}(y,\delta) \leq \hd, \quad |f_{k}(y,\delta)| \leq |\fd|, |g_{k}(y)| \leq |g(y)| \\
|h_{k}(y,\delta,\eta)-h_{k}(\bar{y},\delta)| \leq L_{1}^{k} |y-\bar{y}| , \quad |f_{k}(y,\delta)-f_{k}(\bar{y},\delta)| \leq L_{1}^{k} |y-\bar{y}|, \\
|g_{k}(y)-g_{k}(\bar{y})| \leq L_{1}^{k} |y-\bar{y}|,
\end{gather*}
where $L_{1}^{k}:=2L_{1}(1+\frac{1}{k})$.
In addition $f_{k}, g_k$ are bounded and $h_{k}$ is bounded from above.
 Therefore, gathering Lemma \ref{tofriedman} and   Friedman \cite[Theorem 2.1]{friedman} we get that for all $T>0$ there exists $u_k$ - a bounded solution to  the Cauchy problem
\begin{equation} \label{eqseq} 
 u_{t}+ \frac{1}{2} \Delta u  +\max_{\delta \in D} \biggl(\id \nabla u  +  h_{k}(y,\delta)u + f_{k}(y,\delta)\biggr)  =0, 
\end{equation}
with terminal condition $u(y,T)=g_{k}(y)$.

Proposition \ref{lembounds} ensures that the solution to \eqref{eqseq} has a representation	
\begin{equation*} 
u_{k}(y,t)=  \sup_{\delta \in \mathcal{D}}   \mathbb{E}_{y,t} \biggl( \int_{t}^{T}e^{\int_{t}^{s}h_{k}(Y_{l},\delta_{l})\, dl} f_{k}(Y_{s},\delta_{s})  ds \biggr),
\end{equation*} 
and for all $y \in B(0,n)$ the following inequalities are satisfied:

\begin{align}  
\left|u_{k}(y,t)\right| &\leq  \int_{0}^{T-t} \kappa(s,n)  ds +p(T-t,n), \notag \\
\left|\nabla u_{k}(y,t)\right| & \leq \left(L_{1}^{k}  + \frac{L_{1}^{k}}{|L_{2}|}\right)   \biggl( \int_{0}^{T-t} \max \{ 1, e^{L_{2}s}\} \kappa(s,n)  ds + \max \{ 1, e^{L_{2}(T-t)}\} p(T-t,n)  \biggr),\notag
\end{align}

To obtain local uniform bounds for other derivatives and its Lipschitz constants we can multiply $u_{k}$ by the function $\alpha $ of the class $\mathcal{C}^{2}$ with compact support such that $\alpha \equiv 1$ on the set $B(0,n)\times (t_{0},t_{1})$. Now we can combine (E8) and (E9) from Fleming and Rischel \cite{FlemingRischel} to obtain desired uniform bound on $B(0,n) \times (t_{0},t_{1}) $ for first derivatives of $u_{k}$. Bounds for second derivatives  $\partial_{i} \partial_{j} u_{k}$ we can obtain by standard Schauder estimates.  By applying the standard argument with a help of the Arzel Ascolli Lemma we can deduce that there exits $\tilde{u}(y,t)$ the limit of some subsequence of $u_{k}$. What is more, the convergence holds locally uniformly in $(y,t)$ together with all suitable derivatives. Thanks to that we immediately get that $\tilde{u} \in \ctwo$ and is a solution to 
\[ 
\begin{cases}
\tilde{u}_{t}+\frac{1}{2} \Delta \tilde{u} + \max_{\delta \in D} \biggl(\id \nabla \tilde{u}  +  \hd \tilde{u}+ \fd\biggr)=0, \quad  t \in [0,T), \; y \in \mathbb{R}^{N} \\
\tilde{u}(y,T)=g(y).
\end{cases}
\]
Since the condition \ref{kappant} is fulfilled we can apply Proposition \ref{lem1} to obtain
\[
\tilde{u}(y,t)=u(y,t)= \sup_{\delta \in \mathcal{D}}  \mathbb{E}_{y,t} \biggl( \int_{t}^{T}e^{\int_{t}^{s}h(Y_{k},\delta_{k})\, dk}  f(Y_{s},\delta_{s}) ds  + e^{\int_{t}^{T}h(Y_{l},\delta_{l})\, dl} g(Y_{T})\biggr).
\]
\end{proof}

\section{Infinite horizon analogue}
Once we have the desired result for finite horizon problems we can pass the time horizon to infinity and prove suitable results for infinite horizon problems.
\begin{thm} \label{main} Suppose that all conditions of Assumption \ref{as} and  Assumption \ref{as2} are satisfied. Then 
\begin{equation*} 
v(y)=  \sup_{\delta \in \mathcal{D}}  \mathbb{E}_{y,0} \biggl( \int_{0}^{+\infty}e^{\int_{0}^{s}h(Y_{k},\delta_{k})\, dk} f(Y_{s},\delta_{s}) ds \biggr)
\end{equation*} 
is a smooth ($\mathcal{C}^{2}(\mathbb{R}^{N})$) solution  to 
\begin{equation} \label{infinitequation}
 \frac{1}{2}\Delta v +\max_{\delta \in D} \biggl(\id \nabla v  +  \hd v + \fd\biggr)=0.  
\end{equation}
 
\end{thm}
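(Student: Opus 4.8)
The plan is to approximate the infinite--horizon problem by finite--horizon problems with vanishing terminal reward, in the spirit of Fleming--McEneaney \cite{mcefle} and Fleming--Hernandez \cite{fleming}. For $T>0$ set
\[
v_{T}(y):=\sup_{\delta\in\mathcal{D}}\ey\Bigl(\ioT e^{\int_{0}^{s}\shd\,dk}\,\ssfd\,ds\Bigr).
\]
By the time--homogeneity of \eqref{ee1} and Proposition \ref{unbounded} applied with the null terminal reward, the function $w^{T}(y,t):=v_{T-t}(y)$ belongs to $\ctwo$ and solves
\[
w_{t}+\tfrac{1}{2}\Delta w+\max_{\delta\in D}\bigl(\id\nabla w+\hd w+\fd\bigr)=0,\qquad t\in[0,T),\quad w^{T}(\cdot,T)=0 .
\]
Moreover, as in the proofs of Propositions \ref{lembounds} and \ref{unbounded} (with $g$ dropped), for every $n\in\mathbb{N}$ and all $y\in B(0,n)$, $t\in[0,T]$,
\[
|w^{T}(y,t)|\le\ii\kappa(s,n)\,ds,\qquad |\nabla w^{T}(y,t)|\le\bigl(L_{1}+\tfrac{L_{1}}{|L_{2}|}\bigr)\ii\bigl(1+e^{L_{2}s}\bigr)\kappa(s,n)\,ds ,
\]
and by Assumption \ref{as2} both right--hand sides are finite; thus these bounds are uniform in $T$ and in the time variable. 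This uniformity is exactly what the two integrability conditions in Assumption \ref{as2} are for.

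Next I would check that $v_{T}\to v$ locally uniformly as $T\to\infty$. Indeed, for $0<T_{1}<T_{2}$ and $y\in B(0,n)$,
\[
|v_{T_{2}}(y)-v_{T_{1}}(y)|\le\sup_{\delta\in\mathcal{D}}\ey\int_{T_{1}}^{T_{2}}e^{\int_{0}^{s}\shd\,dk}\,|\ssfd|\,ds\le\int_{T_{1}}^{T_{2}}\kappa(s,n)\,ds\longrightarrow 0
\]
as $T_{1},T_{2}\to\infty$, by the integrability of $\kappa(\cdot,n)$. So $\{v_{T}\}$ is locally uniformly Cauchy; its locally uniform limit coincides with the function $v$ of the statement, and $|v(y)|\le\ii\kappa(s,n)\,ds$ on $B(0,n)$.

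The core of the proof is an interior regularity estimate for $w^{T}$ that is uniform in $T$ and in the position of the time interval. On a cylinder $B(0,2n)\times(\tau_{0},\tau_{0}+2)$ the $w^{T}$ are bounded solutions of the parabolic HJB equation with $w^{T}$ and $\nabla w^{T}$ confined to a fixed bounded range, independently of $T$ and $\tau_{0}$. Localizing by a $\mathcal{C}^{2}$ cut--off and using, exactly as in the proof of Proposition \ref{unbounded}, the interior estimates (E8)--(E9) of Fleming--Rischel \cite{FlemingRischel} followed by interior Schauder estimates, I would obtain bounds on $w^{T}$, $w^{T}_{t}$, $\nabla w^{T}$, $D^{2}w^{T}$ and on their H\"older seminorms over $B(0,n)\times(\tau_{0}+1,\tau_{0}+2)$ that are independent of $T$ and $\tau_{0}$. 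Here the delicate point is that $f$ and $h$ are unbounded: one uses that over the compact set $\overline{B(0,2n)}$, with $w^{T},\nabla w^{T}$ in a fixed bounded range, the driver $\max_{\delta\in D}\bigl(\id\nabla w^{T}+\hd w^{T}+\fd\bigr)$ is uniformly bounded and H\"older continuous, so the estimates apply with constants not depending on $T$ or on the time origin.

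Finally I would let $T=T_{j}\to\infty$. By these uniform estimates and the Arzel\`a--Ascoli theorem, a subsequence of $(w^{T_{j}})$ converges, together with the relevant derivatives, in $\mathcal{C}^{2,1}_{\mathrm{loc}}(\mathbb{R}^{N}\times(0,\infty))$ to a function $\bar u$ which again solves the parabolic HJB equation on $\mathbb{R}^{N}\times(0,\infty)$. But $w^{T_{j}}(y,t)=v_{T_{j}-t}(y)\to v(y)$ for every fixed $(y,t)$, so $\bar u(y,t)=v(y)$ does not depend on $t$; hence $\bar u_{t}\equiv 0$, $v\in\mathcal{C}^{2}(\mathbb{R}^{N})$, and
\[
\tfrac{1}{2}\Delta v+\max_{\delta\in D}\bigl(\id\nabla v+\hd v+\fd\bigr)=0 ,
\]
which is \eqref{infinitequation}. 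I expect the main obstacle to be precisely the uniformity in $T$ and in the time origin of the interior parabolic estimates of the third step: this is where the finiteness of $\ii e^{L_{2}s}\kappa(s,n)\,ds$ is indispensable, and where the unboundedness of $f$ and $h$ must be handled by localization in $y$.
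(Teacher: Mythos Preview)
Your proposal is correct and follows essentially the same route as the paper: approximate by the finite--horizon value functions $v_{T}(y)=u^{T}(y,0)$ from Proposition~\ref{unbounded} with $g\equiv 0$, carry over the uniform bounds on $v_{T}$ and $\nabla v_{T}$ (this is exactly where Assumption~\ref{as2} enters), upgrade to uniform interior $\mathcal{C}^{2,1}$ bounds via the localized (E8)--(E9) estimates and Schauder, and pass to the limit by Arzel\`a--Ascoli.

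The one point where your argument and the paper's diverge is the final step. The paper proves the direct estimate $|\partial_{t}v(y,t)|\le \kappa(t,n)$ (from the same Cauchy--type inequality you wrote, divided by $\xi$ and sent to $0$) and then selects the sequence $t_{n}\to\infty$ so that $\kappa(t_{n},n)\to 0$, forcing the time derivative to vanish in the limit. You instead observe that the $\mathcal{C}^{2,1}_{\mathrm{loc}}$--limit $\bar u(y,t)$ must equal $v(y)$ for every $t$ (by your locally uniform Cauchy argument), hence is time--independent and $\bar u_{t}\equiv 0$. Your version is slightly cleaner since it does not require arguing that $\kappa(t_{n},n)\to 0$ along a well--chosen subsequence; the paper's version has the small advantage of giving an explicit pointwise bound on $\partial_{t}v$ in terms of $\kappa$. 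Both lead to the same conclusion.
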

\begin{proof} 
  Let $u^{T}$ be the solution to 
\begin{equation*} 
 u_{t}+ \frac{1}{2} \Delta u   +\max_{\delta \in D} \biggl(\id\nabla u  +  \hd u + \fd\biggr) =0, 
\end{equation*}
constructed in Proposition \ref{unbounded}.
It is important to notice here that 
 $v(y,t)=u^{T}(y,T-t)=u^{t}(y,0)$ is a solution to 
\[ 
\begin{cases}
v_{t}-\frac{1}{2} \Delta v- \max_{\delta \in D} \biggl(\id \nabla v  +  \hd v+ \fd\biggr)=0, \quad  t \in (0,T] \\
v(y,0)=0.
\end{cases}
\]
and for all $y \in B(0,n)$ 

\begin{align}  
\left|v(y,t)\right| &\leq L_{1}  \biggl( \int_{0}^{t} \kappa(s,n)  ds \biggr) , \notag \\
\left|\nabla v(y,t)\right| & \leq \left(L_{1}  + \frac{L_{1}}{|L_{2}|}\right)  \biggl( \int_{0}^{t}  \max \{1,e^{L_{2} s}\}\kappa(s,n) ds \biggr),\notag
\end{align}

Now let us consider the estimate on  $\frac{\partial }{\partial t }v(y,t)$. Namely, let $t > 0$ be fixed. Observe that for $0<\xi<t$ 
\begin{align*}
|u^{t}(y,0)-u^{t-\xi}(y,0)| \leq  \sup_{\delta \in \mathcal{D}}  \biggl|I(t,y,\eta,\delta) -I(t-\xi,y,\eta,\delta) \biggr|,
\end{align*}
where 
$$I(t,y,\eta,\delta):= \mathbb{E}_{y,0}  \biggl( \int_{0}^{t}e^{\int_{0}^{s}h(Y_{k},\delta_{k})\, dk} |f(Y_{s},\delta_{s})| ds \biggr). $$
Note that 
\begin{align*}
\biggl| I(t,y,\delta) -I(t-\xi,y,\delta) \biggr| &= \mathbb{E}_{y,0}\int_{t-\xi}^{t} e^{\int_{0}^{s}h(Y_{k},\delta_{k})\, dk}| f(Y_{s},\delta_{s})| ds \\ & \quad \leq  \int_{t-\xi}^{t} \kappa(s,n) ds \leq \xi \max_{s \in [t-\xi,t]} \kappa(s,n),  
\end{align*}
which yields 
\begin{equation}
\left|\frac{\partial }{\partial t }v(y,t)\right| \leq  \kappa(t,n)  \label{bounddert}.
\end{equation}

The solution  will be constructed by taking the limit $v(y)=\lim_{t \to \infty} v(y,t)$ (such limit exists since Assumption \ref{as2} is satisfied).
As in the proof of the previous theorem, we can combine E8 and E9 from Fleming and Rishel \cite{FlemingRischel}
to obtain suitable bounds for all derivatives.
By the Arzel-Ascolli Lemma, for each $B(0,n)$ there exist a sequence $(t_{n}, n=1,2,\ldots)$ such that $v(y,t_{n})$ is convergent to some twice continuously differentiable function, what is more, the convergence holds locally uniformly together with suitable derivatives. Moreover from  \eqref{bounddert} it follows that $\lim_{n \to \infty}\frac{\partial }{\partial t } v(y,t_{n})=0$. This indicates that $v$ is a solution to \eqref{infinitequation}.
\end{proof}

It should be noticing here that in the infinite horizon case we have proved only that the value function is a smooth solution to PDE, but still we are not sure if the maximizer in \eqref{infinitequation} determines the optimal control for our problem.  Instead of standard reasoning we propose to use the following result, which can be applied for many classical problem formulations.
\begin{prop} \label{convergence}
Suppose that $\lim_{n \to \infty} T_{n} = +\infty$ and $(\delta^{n}, \;n \in \mathbb{N})$ is a sequence of progressively measurable processes such that $\lim_{n \to \infty} \delta_{t}^{n} = \delta_{t}$ a.s. and $\lim_{n \to \infty}Y_{t}^{\delta^n} \to Y_{t}^{\delta}$ a.s.,  then under conditions of Theorem \ref{main} we have
\[
\lim_{n \to \infty} \ey \int_{0}^{T_{n}}e^{\int_{0}^{s}h(Y_{k}^{\delta^{n}},\delta_{k}^{n})\, dk} |f(Y_{s}^{n},\delta_{s}^{n})| ds =\ey \int_{0}^{+\infty}e^{\int_{0}^{s}h(Y_{k}^{\delta},\delta_{k})\, dk} |f(Y_{s},\delta_{s})| ds.
\]
 
\end{prop}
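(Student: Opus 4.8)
The plan is a two--stage dominated convergence argument: first inside the probability space, for each fixed time, and then in the time variable. Since $f$ enters only through $|f|$, all integrands are nonnegative, so Tonelli's theorem lets one exchange expectation and time integral and work with the deterministic functions
\[
\Phi_{n}(s):=\ey\Bigl[e^{\int_{0}^{s}h(Y_{k}^{\delta^{n}},\delta_{k}^{n})\, dk}\,|f(Y_{s}^{\delta^{n}},\delta_{s}^{n})|\Bigr],\qquad \Phi(s):=\ey\Bigl[e^{\int_{0}^{s}h(Y_{k}^{\delta},\delta_{k})\, dk}\,|f(Y_{s},\delta_{s})|\Bigr],
\]
so that the left-hand side of the claim is $\int_{0}^{T_{n}}\Phi_{n}(s)\,ds$ and the right-hand side is $\int_{0}^{+\infty}\Phi(s)\,ds$. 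Fix $y$ and choose $n_{0}\in\mathbb{N}$ with $y\in B(0,n_{0})$; to avoid a clash with the sequence index, abbreviate $\kappa(\cdot):=\kappa(\cdot,n_{0})$, where $\kappa$ is the function of Assumption \ref{as2}. It then suffices to prove (i) $\Phi_{n}(s)\to\Phi(s)$ for each fixed $s>0$, and (ii) $\int_{0}^{T_{n}}\Phi_{n}(s)\,ds\to\int_{0}^{+\infty}\Phi(s)\,ds$.

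For (i) I would fix $s>0$ and argue as follows. By continuity of $h$, the hypotheses $\delta^{n}_{k}\to\delta_{k}$ and $Y^{\delta^{n}}_{k}\to Y^{\delta}_{k}$ a.s., and Fubini on $[0,s]\times\Omega$, one gets $h(Y^{\delta^{n}}_{k},\delta^{n}_{k})\to h(Y^{\delta}_{k},\delta_{k})$ for a.e. $(k,\omega)$. The Gronwall estimate \eqref{inmax} applied on $[0,s]$ is uniform in $n$, since all the $\delta^{n}$ are driven by the \emph{same} Wiener process; together with the linear growth of $h$ over the compact set $D$ it produces a finite random bound $\sup_{n}\sup_{k\le s}|h(Y^{\delta^{n}}_{k},\delta^{n}_{k})|\le c_{s}(\omega)$, with $c_{s}$ an affine function of $\max_{t\in[0,s]}|W_{t}|$. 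Dominated convergence on $[0,s]$ then yields $\int_{0}^{s}h(Y^{\delta^{n}}_{k},\delta^{n}_{k})\,dk\to\int_{0}^{s}h(Y^{\delta}_{k},\delta_{k})\,dk$ a.s., and continuity of $f$ and of $\exp$ upgrades this to a.s. convergence of the integrand defining $\Phi_{n}(s)$. For domination, the same uniform path bound shows that this integrand is at most $G_{s}(\omega):=C\,e^{s\,c_{s}(\omega)}\bigl(1+c_{s}(\omega)\bigr)$ with $C$ depending only on $L_{1}$ and $\sup_{\delta\in D}|f(0,\delta)|$, and \eqref{max} (a polynomial factor does not destroy integrability) gives $\ey G_{s}<+\infty$. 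Dominated convergence on $(\Omega,P)$ then gives $\Phi_{n}(s)\to\Phi(s)$.

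For (ii): each $\delta^{n}$ takes values in $D$ and so lies in $\mathcal{D}$, hence Assumption \ref{as2} gives $\Phi_{n}(s)\le\kappa(s)$ for all $n$ and all $s$, with $\int_{0}^{+\infty}\kappa(s)\,ds<+\infty$; moreover $\mathbbm{1}_{[0,T_{n}]}(s)\to1$ for each $s$ because $T_{n}\to+\infty$. Writing $\int_{0}^{T_{n}}\Phi_{n}(s)\,ds=\int_{0}^{+\infty}\mathbbm{1}_{[0,T_{n}]}(s)\,\Phi_{n}(s)\,ds$ and combining (i) with dominated convergence on $\bigl([0,+\infty),ds\bigr)$ with majorant $\kappa$, one obtains $\int_{0}^{T_{n}}\Phi_{n}(s)\,ds\to\int_{0}^{+\infty}\Phi(s)\,ds$, which is the claim.

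The one genuinely delicate point is the domination in step (i): one must extract from \eqref{inmax} a path bound for $Y^{\delta^{n}}$ that is uniform in $n$ --- legitimate precisely because all the controls use the same noise $W$ --- and then verify that the resulting ``exponential of a linear-growth'' envelope is integrable, which is exactly the content of \eqref{max}. Everything else is routine: Tonelli to reduce to the deterministic functions $\Phi_{n}$, continuity of $f$ and $h$ for the almost sure convergence, and Assumption \ref{as2} to provide the integrable majorant $\kappa$ for the final passage to the limit in time.
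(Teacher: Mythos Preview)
Your argument is correct and, in fact, more explicit than the paper's own. The paper proceeds by an $\varepsilon/3$ splitting: it uses Assumption~\ref{as2} to choose $n_{0}$ so that the tail $\sup_{\delta\in\mathcal{D}}\ey\int_{T_{n_{0}}}^{\infty}e^{\int_{0}^{s}h}\,|f|\,ds<\varepsilon/3$, and then simply asserts the existence of $n_{0}'$ beyond which the finite-horizon expectations on $[0,T_{n_{0}}]$ differ by less than $\varepsilon/3$; the final estimate splits the total difference into this finite-horizon piece plus two tails. Your approach recasts the same two ingredients as a double dominated-convergence argument: Tonelli reduces matters to the scalar functions $\Phi_{n}$, your step~(i) supplies the pointwise convergence $\Phi_{n}(s)\to\Phi(s)$ that the paper leaves implicit, and your step~(ii) is exactly the tail control via the integrable majorant $\kappa$ from Assumption~\ref{as2}. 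The gain in your version is that you actually justify step~(i) --- the uniform-in-$n$ Gronwall bound \eqref{inmax} together with \eqref{max} is precisely what is needed and is the point the paper skips --- whereas the paper's $\varepsilon/3$ presentation is shorter to state but leaves that step to the reader.
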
 
\begin{proof}
Fix $\varepsilon >0$. Note that there exists $n_{0} \in \mathbb{N}$ such that for any $n \geq n_{0}$
\[
\sup_{\delta \in \mathcal{D}} \ey \int_{T_{n}}^{+\infty}e^{\int_{0}^{s}h(Y_{k},\delta_{k}^{n})\, dk} |f(Y_{s},\delta_{s})| ds < \frac{\varepsilon}{3}
\]
 and there exists $n'_{0}$ such that for any $n \geq n_{0}'$
\[
\left|\ey \int_{0}^{T_{n_{0}}}e^{\int_{0}^{s}h(Y_{k}^{\delta^n},\delta_{k}^{n})\, dk} f(Y_{s}^{\delta^n},\delta_{s}^{n}) ds  - \ey \int_{0}^{T_{n_{0}}}e^{\int_{0}^{s}h(Y_{k},\delta_{k})\, dk} f(Y_{s},\delta_{s}) ds  \right| < \frac{\varepsilon}{3}.
\]
Fix $n \geq \max\{n_{0},n_{0}'\} $ and consider
\begin{multline*}
\left|\ey \int_{0}^{T_{n}}e^{\int_{0}^{s}h(Y_{k}^{\delta^n},\delta_{k}^{n})\, dk} f(Y_{s}^{\delta^n},\delta_{s}^{n}) ds  - \ey \int_{0}^{+\infty}e^{\int_{0}^{s}h(Y_{k},\delta_{k})\, dk} f(Y_{s},\delta_{s})ds\right| \\ \leq \left|\ey \int_{0}^{T_{n_{0}}}e^{\int_{0}^{s}h(Y_{k}^{\delta^{n}},\delta_{k}^{n})\, dk} f(Y_{k}^{\delta^n},\delta_{s}^{n}) ds  -\ey \int_{0}^{T_{n_{0}}}e^{\int_{0}^{s}h(Y_{k},\delta_{k})\, dk} f(Y_{s},\delta_{s}) ds  \right| \\ + \ey \int_{T_{n_0}}^{T_{n}}e^{\int_{0}^{s}h(Y_{k}^{\delta^n},\delta_{k}^{n})\, dk} |f(Y_{s}^{\delta^n},\delta_{s}^{n})| ds + \ey \int_{T_{n_{0}}}^{+\infty}e^{\int_{0}^{s}h(Y_{k},\delta_{k})\, dk} |f(Y_{s},\delta_{s})| ds   < \varepsilon.
 \end{multline*}
\end{proof}
The above result suggest that in many problems we can find optimal feedback controls $\delta^{n}(y,t)$ for a sequence of finite time horizon formulations  and prove that they are convergent to the infinite horizon feedback control $\delta(y,t)$. To complete this reasoning  we should also check wether $\lim_{n \to \infty}Y_{t}^{\delta^{n}} =Y_{t}^{\delta}$. It will be useful to use the following result.

\begin{prop}
Let $b_{k}(y,t)$, $k \in \mathbb{N}$ be a family of continuous functions such that there exist a constant $K>0$ and a sequence $K_{n}>0$, $n \in \mathbb{N}$ (independent of $k$) such that 
\begin{gather*}
|b_{k}(y,t) - b_{k}(\bar{y},t)|\leq K_{n}|y-\bar{y}|, \quad y,\bar{y} \in B(0,n), \; t\in [0,T], \\
|b_{k}(y,t)|  \leq K(1+|y|), \quad y \in \mathbb{R}^{N},  t\in [0,T].
\end{gather*} 
Suppose further that $\lim_{k \to \infty}b_{k}(y,t)= b(y,t)$, where $b(y,t)$  is a continuous function and  $Y^{k}$, $k \in \mathbb{N}$ is the sequence of solutions to 
\[
dY^{k}_{t}=b_{k}(Y^{k}_{t},t)dt+dW_{t}.
\]
Then $\lim_{k \to \infty} Y^{k}_{t}=Y_{t}$ a.s. for all $t>0$ , where $Y$ is the solution to 
\[
dY_{t}=b(Y_{t},t)dt+dW_{t}.
\]
\end{prop}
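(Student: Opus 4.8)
The plan is to combine a uniform moment estimate, a localization argument on balls, and the classical stability theorem for SDEs with locally Lipschitz coefficients. First I would establish a uniform second moment bound: using It\^o's formula applied to $|Y^{k}_{t}|^{2}$ together with the linear growth bound $|b_{k}(y,t)|\leq K(1+|y|)$ (uniform in $k$), one gets $\mathbb{E}|Y^{k}_{t}|^{2}\leq C_{T}(1+|y|^{2})$ with $C_{T}$ independent of $k$, via Gronwall. The same estimate holds for $Y$ since $b$ inherits the linear growth bound in the limit. In particular, for any fixed $t$ and any $\rho>0$, $P(\sup_{s\le t}|Y^{k}_{s}|>\rho)$ and $P(\sup_{s\le t}|Y_{s}|>\rho)$ are small uniformly in $k$ for $\rho$ large (Markov plus a maximal inequality, or Doob applied to the local-martingale part after the drift is controlled).

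Next I would localize. Fix $t>0$ and $\varepsilon>0$; choose $n$ large so that the "escape" events $A_{n}^{k}=\{\sup_{s\le t}|Y^{k}_{s}|>n\}$ and $A_{n}=\{\sup_{s\le t}|Y_{s}|>n\}$ all have probability below $\varepsilon$, uniformly in $k$. On the complement, both $Y^{k}$ and $Y$ stay in $B(0,n)$, where $b_{k}$ is Lipschitz with constant $K_{n}$ uniform in $k$. Write, for $s\le t$,
\[
Y^{k}_{s}-Y_{s}=\int_{0}^{s}\bigl(b_{k}(Y^{k}_{r},r)-b(Y_{r},r)\bigr)\,dr
=\int_{0}^{s}\bigl(b_{k}(Y^{k}_{r},r)-b_{k}(Y_{r},r)\bigr)\,dr+\int_{0}^{s}\bigl(b_{k}(Y_{r},r)-b(Y_{r},r)\bigr)\,dr.
\]
On the good set the first integrand is bounded by $K_{n}|Y^{k}_{r}-Y_{r}|$, and the second is a deterministic-in-$k$ error term $\eta_{k}(r):=b_{k}(Y_{r},r)-b(Y_{r},r)$ which tends to $0$ pointwise by hypothesis and is dominated (on $A_{n}^{c}$) by $2K(1+n)$, hence $\int_{0}^{t}|\eta_{k}(r)|\,dr\to 0$ a.s.\ by dominated convergence. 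Applying Gronwall to $\phi_{k}(s):=\sup_{r\le s}|Y^{k}_{r}-Y_{r}|\,\mathbbm{1}_{(A_{n}^{k}\cup A_{n})^{c}}$ (or, more cleanly, running the comparison up to the stopping time $\tau_{n}=\inf\{s:|Y^{k}_{s}|\vee|Y_{s}|\ge n\}$) gives
\[
\sup_{s\le t\wedge\tau_{n}}|Y^{k}_{s}-Y_{s}|\le e^{K_{n}t}\int_{0}^{t}|\eta_{k}(r)|\,dr\xrightarrow[k\to\infty]{}0\quad\text{a.s.}
\]

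Finally I would remove the localization: since $P(\tau_{n}\le t)\le P(A_{n}^{k})+P(A_{n})<2\varepsilon$ for all $k$, combining the displayed a.s.\ convergence on $\{\tau_{n}>t\}$ with the arbitrariness of $\varepsilon$ (hence of $n$) yields $Y^{k}_{t}\to Y_{t}$ in probability for each fixed $t$; passing to a subsequence gives a.s.\ convergence, and the standard monotonicity of the Gronwall estimate in $n$ upgrades this to a.s.\ convergence along the full sequence for all $t>0$ simultaneously (e.g.\ first on a countable dense set of times, then use pathwise continuity). The main obstacle is the bookkeeping around the uniform-in-$k$ control of the exit probabilities: one must make sure the threshold $n$ can be chosen before taking $k\to\infty$, which is exactly why the linear-growth constant $K$ is assumed independent of $k$ — without that, the a priori moment bounds would not be uniform and the localization would collapse. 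Everything else (It\^o, Gronwall, dominated convergence) is routine.
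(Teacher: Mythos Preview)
Your overall strategy --- localize to a ball where the drifts are uniformly Lipschitz, split $b_k(Y^k_r,r)-b(Y_r,r)$ into a Lipschitz piece and the error $\eta_k(r)=b_k(Y_r,r)-b(Y_r,r)$, then apply Gronwall together with dominated convergence for $\int_0^t|\eta_k|\,dr$ --- is exactly the paper's. The gap is in how you remove the localization. You control the exit events only through second-moment bounds, which gives $Y^k_t\to Y_t$ \emph{in probability}; your subsequent upgrade to almost-sure convergence of the full sequence is not justified. Neither ``monotonicity of the Gronwall estimate in $n$'' nor ``first on a countable dense set of times, then use pathwise continuity'' turns convergence in probability into a.s.\ convergence along the original sequence: the subsequence you extract may depend on the time point (and on $n$), and nothing in your argument pins down the full sequence.

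The paper avoids this difficulty by exploiting a \emph{pathwise} a~priori bound rather than a moment bound. Since the diffusion coefficient is the identity and $|b_k|\le K(1+|y|)$ with $K$ independent of $k$, Gronwall applied directly to the trajectories yields
\[
|Y^{k}_{t}|\ \le\ \Bigl(KT+|y|+\max_{0\le s\le T}|W_s|\Bigr)e^{KT},\qquad t\in[0,T],
\]
a random bound that does not depend on $k$. Hence for almost every $\omega$ there is a finite $N=N(\omega)$ such that \emph{all} the paths $Y^{k}(\omega)$ and $Y(\omega)$ remain in $B(0,N)$ on $[0,T]$. The paper then works with globally Lipschitz truncations $b_k^{N}$ (agreeing with $b_k$ on $B(0,N)$), for which the comparison is a pathwise ODE inequality with constant $K_{N(\omega)}$; Gronwall plus dominated convergence give $|Y^{k}_{t}(\omega)-Y_{t}(\omega)|\to 0$ directly, for every $t$ and for the full sequence, with no detour through convergence in probability. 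Replacing your moment step by this pathwise bound closes the gap; everything else in your outline is fine.
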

\begin{proof}
Assume first that $K_{n}=K'$ for all $n \in \mathbb{N}$. Then,
\begin{multline*}
|Y^{k}_{t}-Y_{t}| \leq \iot |b_{k}(Y^{k}_s,s)-b(Y_s,s)| ds \leq  \iot |b_{k}(Y^{k}_s,s)-b_k(Y_s,s)| ds +  \iot |b_{k}(Y_s,s)-b(Y_s,s)| ds \\
\leq K' \iot |Y^{k}_{s}-Y_{s}| ds + \iot |b_{k}(Y_s,s)-b(Y_s,s)| ds.
\end{multline*}
Using the Gronwall inequality and the fact that $\lim_{k \to \infty}\iot |b_{k}(Y_s,s)-b(Y_s,s)| ds=0$  for all $t>0$, we obtain that  $\lim_{k \to \infty}Y^{k}_{t}=Y_{t}$ a.s. for any $t>0$.
Let's consider now the general problem and define the sequence 
$$
b_{k}^{n}(y,t)=\begin{cases}
b_{k}(y,t) \quad  & \text{if} \quad |y| \leq n,\\
 b_{k}(y,t)\left(2-\frac{|y|}{n}\right), & \text{if} \quad n \leq |y| \leq 2n,\\
  0        &\text{if} \quad  |y| \geq 2n.  \end{cases}
$$ 

Fix $n \in \mathbb{N}$ and consider the sequence of diffusions
\[
dY^{k,n}_{t}=b_{k}^{n}(Y^{k,n}_{t},t)dt+dW_{t}.
\]

We have already proved that 
 $\lim_{k \to \infty}Y^{k,n}_{t}=Y_{t}^{n}$. Since $b_{k}^{n}(y,t)= b_{k}(y,t)$ for all $y \in B(0,n)$, then by Friedman \cite[Chapter 5, Theorem 2.1]{friedman2} 
 \[
 P(\sup_{0 \leq t \leq \tau_{n,k}}|Y_{t}^{k,n}-Y_{t}^{k}|=0)=1,\; P(\sup_{0 \leq t \leq \tau_{n}}|Y^{n}_{t}-Y_{t}|=0)=1
 \]
for all $k,n \in \mathbb{N}$, where $\tau_{k,n}$ is the first exit of the process $Y^{k}$ from $B(0,n)$. Using \eqref{inmax} we know that   
\[
|Y_{t}^{k,n}|\leq \left(L_{1}T+ |y|+\max_{t \in [0,T]}|W_{t}| \right)e^{KT}
\]
Let's define $\Omega_{N}=\{ \omega \in \Omega | \left(L_{1}T+ |y|+\max_{t \in [0,T]}|W_{t}| \right)e^{KT} \leq N\}$. It is important to note that $\bigcup_{N=1}^{+\infty}\Omega_{N}= \Omega$.

If we fix $N \in \mathbb{N}$ and  take any $\omega \in \Omega_{N}$ then $|Y_{t}^{k,n}(\omega)| \leq N$ for all $k,n \in \mathbb{N}$. But we know that $\sup_{0 \leq t \leq T}|Y_{t}^{N}(\omega)-Y_{t}(\omega)|=0$ and $\sup_{0 \leq t \leq T}|Y_{t}^{k,N}(\omega)-Y_{t}^{k}(\omega)|=0 $ for  almost all $\omega \in \Omega_{N}$. 
Applying that reasoning to each $N \in \mathbb{N}$ we obtain that $P(\quad \text{for all} \quad t \in [0,T] \; \lim_{k \to \infty} Y^{k}_{t}=Y_{t})=1.$

\end{proof}
\begin{rem} \label{robust} All results presented so far can be extended  to minimax  problems with Hamilton Jacobi Bellman Isaacs equations of the form
\begin{equation*} 
u_{t}+ \frac{1}{2} \Delta u +\max_{\delta \in D} \min_{\eta \in \Gamma} \biggl([i(y,\delta)+l(\delta,\eta)] \nabla u  +  h(y,\delta,\eta) u+ f(y,\delta,\eta)\biggr)=0, \quad t\in [0,T), y \in \mathbb{R}^{N},
\end{equation*}
together with its infinite horizon analogue. In that case it is possible to derive stochastic representation of the Kalton-Elliott form:
\begin{align*} 
u(y,t)=  \sup_{\delta \in \mathcal{D}}  \inf_{\eta \in \mathcal{N}} \mathbb{E}_{y,t}^{l(\delta,\eta(\delta))} \biggl( \int_{t}^{T}e^{\int_{t}^{s}h(Y_{k},\delta_{k},\eta(\delta_{k}))\, dk} f(Y_{s},\delta_{s},\eta(\delta_{s})) ds + e^{\int_{t}^{T}h(Y_{k},\delta_{k},\eta(\delta_{k}))\, dk}g(Y_{T})\biggr),
\end{align*} 
where $Y$ is a solution  to $dY_{t}=i(Y_{t},\delta_{t},\eta(\delta_{t}))dt +  dW_{t}$, $\mathcal{D}$ is the class of all progressively measurable processes taking values in $D$, $\mathcal{N}$ is the family of all functions: $\eta:D \times [0,+\infty) \times \Omega \to \Gamma$ with the property that for all $\delta \in \mathcal{D}$ the process $(\eta(\delta_{t}):=\eta(\delta_{t},t,\cdot)|\; 0 \leq t < +\infty)$ is progressively measurable. The expression $\mathbb{E}_{y,t}^{l(\delta,\eta(\delta))}$ is used to denote that the expectation is taken under the measure $Q^{l(\delta,\eta(\delta))}$, where 
\[
\frac{dQ^{l(\delta,\eta(\delta))}}{dP}= \exp{\left[\int_{0}^{T}l(\delta_{s},\eta(\delta_{s}))dW_{s} - \frac{1}{2} \int_{0}^{T}|l(\delta_{s},\eta(\delta_{s}))|^{2} ds \right]}.
\]
For more details see Zawisza \cite[Lemma 4.1]{zawisza}. Once we establish such representation we are able to repeat all results and proofs contained in this paper. For other tractable minimax problems and possible stochastic representation we recommend the work of Fleming and Hernandez \cite{fleher}. 
\end{rem} 

\section{Optimal consumption - investment problem}

 \subsection{Consumption-investment problem}
Suppose that the investor can invest in two primitive securities: a
bank account $(B_{t}, 0 \leq t < + \infty)$ and a share $(S_{t},0 \leq t <
+ \infty)$. We assume also that prices are affected by additional observable stochastic factor  $(Y_{t},0 \leq t < + \infty)$. This factor can represent an additional source of an  uncertainty such as: a stochastic volatility, a stochastic interest rate or other  economic conditions. Our economy is given by the following system of  stochastic differential equations
 \begin{equation*} \label{model}
\begin{cases}
dB_{t} &=r(Y_{t}) B_{t} dt,  \\
dS_{t} &=[r(Y_{t})+b(Y_{t})] S_{t}  dt + \sigma(Y_{t}) S_{t}  dW_{t}^{1},   \\
dY_{t} &=i(Y_{t}) dt + (\rho dW_{t}^{1} + \sqrt{1-\rho^{2}} dW_{t}^{2}),
\end{cases}
\end{equation*}
where $W^{1}$ and $W^{2}$ are independent Wiener processes and  $\rho$ is a correlation coefficient. 
The dynamics of the investors wealth process $(X^{\pi,c}_{t}, 0 \leq t < + \infty )$ is given by the stochastic differential equation
\begin{equation} \label{wealth}
\begin{cases}
d X_{t} = (r(Y_{t})X_{t} + \pi_t b(Y_{t})X_{t}) dt +\pi_{t}  \sigma(Y_{t}) X_{t}  dW_{t}^{1}-c_{t}X_{t}dt,\\
 X_{s}=x,
 \end{cases}
 \end{equation}
where $x$ denotes a current wealth of the investor, $\pi$ we can interpret as a capital invested in $S_{t}$, whereas $c$ is a consumption rate. We assume that $\pi$ and $c$ are progressively measurable and are allowed to take values only in intervals $[-R,R]$ and $[0,m]$ respectively. 
The objective of the investor is to maximize
\[
\mathcal{J}^{\pi,c}(x,y,t)=\et \left[e^{-wT} \frac{(X_{T}^{\pi})^{\gamma}}{\gamma} + \itT e^{-ws} \frac{(c_{s} X_{s})^{\gamma}}{\gamma} ds  \right],  
\]
or its infinite horizon analogue
\[
\mathcal{K}^{\pi,c}(x,y)=\ex \left[  \int_{0}^{+\infty} e^{-ws} \frac{(c_{s} X_{s})^{\gamma}}{\gamma} ds  \right],
\]
where $w>0$ is a discount factor. Aforementioned models are some extensions of models propose, for instance, in: Chang et. al \cite{chang}, Korn and Kraft \cite{korn}, Trybu\l a \cite{tryb},  however in opposite to those works we restrict here on the case when $\pi$, $c$ are constrained processes. Unconstrained problem will be treated elsewhere.
Note that under some mild conditions on the process $\pi$ and $c$, there exists a unique strong solution to equation \eqref{wealth} and is given by
\[
X_{t}=x e^{\int_{s}^{t}[  r(Y_{k}) +b(Y_{k})\pi_{k} - \frac{1}{2}\sigma^{2}(Y_{k})\pi_{k}^{2}-c_{k} ]dk + \int_{t}^{T}\sigma(Y_{k})\pi_{k} dW_{k}^{1}}.
\]
The above process is determined under the starting condition $X_{s}=x$.
Therefore,  functions $\mathcal{J}^{\pi,c}(x,y,t)$ and $\mathcal{K}^{\pi,c}(x,y)$ can be transformed in the following way
\begin{multline*}
\mathcal{J}^{\pi,c}(x,y,t)=\frac{x^{\gamma}}{\gamma}\mathbb{E}^{Q^{\pi}_{T}}_{y,0} \Biggr[ e^{\int_{t}^{T}\left(\gamma[r(Y_{s}) +b(Y_{s})\pi_{s} - \frac{1}{2}(1-\gamma)\sigma^{2}(Y_{s})\pi_{s}^{2}- c_{s}]-w \right)ds} \\+ \int_{t}^{T}e^{\int_{t}^{s} \left(\gamma[r(Y_{k}) +b(Y_{k})\pi_{k} - \frac{1}{2}(1-\gamma)\sigma^{2}(Y_{k})\pi_{k}^{2}- c_{k}]-w \right)dk} c_{s}^{\gamma} ds \Biggl],
\end{multline*}
\[
\mathcal{K}^{\pi,c}(x,y) = \lim_{T \to +\infty}\mathbb{E}^{Q^{\pi}_{T}}_{y,0} \int_{0}^{T}e^{\int_{0}^{s}\left(\gamma[r(Y_{k}) +b(Y_{k})\pi_{k} - \frac{1}{2}(1-\gamma)\sigma^{2}(Y_{k})\pi_{k}^{2}- c_{k}]-w\right) dk} c_{s}^{\gamma} ds
\]
where 
\[
\frac{dQ^{\pi}_{T}}{dP}=e^{ - \frac{1}{2} \int_{0}^{T}\sigma^{2}(Y_{k})\pi_{k}^{2}dk + \int_{0}^{T}\sigma(Y_{k})\pi_{k} dW_{k}^{1}} 
\]
and $\pi_{k}=0$ for all $k \leq t$ (for the finite horizon case).
That shows that  the term $\frac{x^{\gamma}}{\gamma}$ can be omitted, and it is worth to consider only the function which is dependent only on $(y,t)$.  
The Girsanow Theorem  gives us the motivation to consider HJB of the form  

\begin{multline}
\label{eqportfolio}
  u_{t} + \frac{1}{2}u_{yy}+ i(y)u_{y} +\max_{\pi \in [-R,R]} \left( \rho \pi \sigma(y)  u_{y}+\left[\gamma b(y)\pi-\frac{1}{2} (\gamma - \gamma^{2})  \pi^{2} \sigma^{2}(y)\right]u   \right) \\ +  \max_{c \in [0,m]} \left(-\gamma cu+c^{\gamma}\right)+ [\gamma r(y)-w]u =0,
\end{multline}
with terminal condition $u(y,T)=1$.
Having Assumption \ref{as} and Assumption \ref{as2} in mind,  we assume the following.

\begin{as} \label{as4}  There exist  $L_{1}>0$, $L_{2} < 0$  that for $i$, $\zeta=b,\sigma^{2},r,i$ and all $y,\bar{y} \in \mathbb{R}^{N}$, we have
\begin{align} 
|\zeta(y)-\zeta(\bar{y})|\leq L_1 |y- \bar{y}|, \notag \\ 
(y-\bar{y})[i(y) -i(\bar{y})]  \leq L_{2}|y-\bar{y}|^{2} \notag .
\end{align}
\end{as}
\begin{prop}

Suppose that all conditions of Assumption \ref{as4}  are  satisfied. Then 
 there exists  $\ctwo$ solution to \eqref{eqportfolio}. Moreover, any Borel measurable maximizer in
 \eqref{eqportfolio} is an optimal feedback strategy for $\mathcal{J}^{\pi,c}(x,y,t)$.
\end{prop}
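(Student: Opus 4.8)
The plan is to cast the consumption--investment HJB equation \eqref{eqportfolio} into the abstract framework of Theorem~\ref{main}, so that the existence of a $\ctwo$ solution and the optimality of the feedback maximizer follow from the general results already established. First I would identify the data of the abstract problem: set $N=1$, take the state process $Y$ with drift $i(y)$ and unit diffusion as in \eqref{ee1}, let the control be $\delta=(\pi,c)$ ranging over the compact set $D=[-R,R]\times[0,m]$, and read off from \eqref{eqportfolio} the coefficients
\begin{align*}
\tilde i(y,\delta) &= i(y)+\rho\pi\sigma(y),\\
h(y,\delta) &= \gamma b(y)\pi-\tfrac12(\gamma-\gamma^{2})\pi^{2}\sigma^{2}(y)-\gamma c+\gamma r(y)-w,\\
f(y,\delta) &= c^{\gamma}.
\end{align*}
Here $f$ is a constant function of $y$ (hence bounded and Lipschitz with constant $0$), $g\equiv 1$ in the finite-horizon case, and the quadratic $-\tfrac12(\gamma-\gamma^2)\pi^2\sigma^2(y)u$ term is exactly the sort of unbounded discount rate the paper is built to handle. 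The main point is that the modified drift $\tilde i(y,\delta)$ still satisfies \eqref{lipcond1}: since $\rho\pi\sigma(y)$ is Lipschitz in $y$ uniformly over the compact $D$ (with some constant $L'$), one gets $(y-\bar y)[\tilde i(y,\delta)-\tilde i(\bar y,\delta)]\le (L_2+L')|y-\bar y|^2$, and one may simply relabel this constant as the $L_2$ of Assumption~\ref{as} (after possibly shrinking $R$, or just noting the abstract theory only needs $L_2\in\mathbb R\setminus\{0\}$ for the finite-horizon part).

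Second, I would verify Assumption~\ref{as2} for this data, which is where the sign condition $L_2<0$ and the strict discount $w>0$ in Assumption~\ref{as4} are used. Because $b,\sigma^2,r$ are Lipschitz they have at most linear growth, so $h(y,\delta)\le C_1+C_2|y|$ for constants independent of $\delta$; meanwhile $f=c^\gamma\le m^\gamma$ is bounded. Using Lemma~\ref{basic}/\eqref{inmax}-type estimates together with the exponential moment bound \eqref{max} for the running maximum of the Brownian motion, one obtains an explicit $\kappa(t,n)$ of the form $K_n e^{-\lambda t}e^{\mu\sqrt t}$ or similar (the precise shape following Proposition~\ref{example} and the proposition after it, applied with the linear-growth bound on $h$ and the mean-reversion coming from $L_2<0$), and the crucial point is that the negative discount $-w$ (or more precisely $\gamma r-w$ dominated on average by the mean-reverting $Y$) makes $\kappa(t,n)$ and $e^{L_2 t}\kappa(t,n)$ integrable on $[0,\infty)$. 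I would state this as a short lemma: under Assumption~\ref{as4} there is $w_0$ (here $w_0=0$ suffices, or a mild largeness of $w$) such that Assumption~\ref{as2} holds for the data above.

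Third, with Assumptions~\ref{as} and~\ref{as2} checked, Theorem~\ref{main} (resp.\ Proposition~\ref{unbounded} for the finite-horizon functional $\mathcal J$) yields a $\mathcal C^2(\mathbb R)$ (resp.\ $\ctwo$) solution $u$ to \eqref{eqportfolio} together with the stochastic representation, and then Proposition~\ref{lem1} (finite horizon) gives that any Borel measurable maximizer $(\pi^*(y,t),c^*(y,t))$ determines an optimal control for the transformed problem. The last step is the change-of-measure bookkeeping: one must translate the statement ``$(\pi^*,c^*)$ is optimal for $u$'' back into ``$(\pi^*,c^*)$ is optimal for $\mathcal J^{\pi,c}(x,y,t)$'', using the Girsanov identity for $Q^\pi_T$ recorded before \eqref{eqportfolio} and the factorization $\mathcal J^{\pi,c}(x,y,t)=\frac{x^\gamma}{\gamma}\,(\text{functional of }(y,t)\text{ only})$; here one checks that the exponential local martingale $dQ^\pi_T/dP$ is a true martingale (immediate since $\sigma(Y)\pi$ is bounded by the linear-growth of $\sigma$ along $Y$ and the boundedness of $\pi$), so the measure change is legitimate and the maximizer of the HJB integrand coincides pointwise with the maximizer in \eqref{eqportfolio}.

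\textbf{Main obstacle.} The routine parts are the Lipschitz/monotonicity bookkeeping for $\tilde i$ and the Girsanov argument; the genuine work is the verification of the integrability conditions in Assumption~\ref{as2}, i.e.\ producing an explicit admissible $\kappa(t,n)$ for the exponential functional of the possibly-unbounded, control-dependent discount rate $h$ built from $\gamma r(y)-w$ minus a nonnegative quadratic in $\pi$. I expect this to require combining the two example propositions of Section~2 (the one giving $\mathbb E_{y,0}e^{\int_0^t h\,dk}\le e^{Qy^+/\alpha}e^{ct}$ under a one-sided linear bound on $h$ and mean reversion of $Y$, and the bounded-$f$ one) and checking that the resulting exponential rate is strictly negative, which is exactly where a lower bound on $w$ (or the structural assumption $L_2<0$) must be invoked. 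Everything else then follows mechanically from the machinery of Sections 3 and 4.
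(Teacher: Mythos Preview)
Your overall plan---cast \eqref{eqportfolio} into the abstract framework with control $\delta=(\pi,c)\in[-R,R]\times[0,m]$, modified drift $\tilde i(y,\delta)=i(y)+\rho\pi\sigma(y)$, discount $h(y,\delta)=\gamma b(y)\pi-\tfrac12(\gamma-\gamma^2)\pi^2\sigma^2(y)-\gamma c+\gamma r(y)-w$, running reward $f(y,\delta)=c^\gamma$, terminal data $g\equiv1$---is exactly what the paper does, and once Assumption~\ref{as} is checked (your first step) the existence of a $\ctwo$ solution follows from Proposition~\ref{unbounded}.

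However, you have misidentified the main difficulty. This proposition is about the \emph{finite}-horizon problem \eqref{eqportfolio} and the functional $\mathcal J^{\pi,c}$; Proposition~\ref{unbounded} requires only Assumption~\ref{as}, not Assumption~\ref{as2}. Your entire second step (producing a $\kappa(t,n)$ integrable on $[0,\infty)$) and your ``Main obstacle'' paragraph are therefore irrelevant here---they belong to the \emph{next} proposition in the paper, concerning \eqref{eqportfolio2} and $\mathcal K^{\pi,c}$, where Assumption~\ref{as5} plays the role of Assumption~\ref{as2}. For the present statement the sign $L_2<0$ is not essential either; as you yourself note parenthetically, the finite-horizon theory only needs $L_2\in\mathbb{R}\setminus\{0\}$, so the new constant $L_2+L'$ coming from $\tilde i$ causes no trouble.

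For the optimality part, the paper does not route through Proposition~\ref{lem1} and a Girsanov back-translation as you outline, but works directly under the measure $Q_T^\pi$: it observes that for every admissible $\pi$ one has
\[
\mathbb{E}_{y,t}^{Q_T^\pi}\sup_{0\le s\le T} e^{\int_t^s h(Y_k,\delta_k)\,dk}\,|u(Y_s,s)|<\infty,
\]
which is immediate from the exponential-growth bound on $u$ and the estimates \eqref{inmax}--\eqref{max} (valid under $Q_T^\pi$ since the drift $\tilde i$ still satisfies Assumption~\ref{as}), and then invokes a standard verification argument. Your approach via Proposition~\ref{lem1} plus the $\mathcal J^{\pi,c}=\tfrac{x^\gamma}{\gamma}\cdot(\text{functional of }(y,t))$ factorization would also work and is essentially equivalent, but the paper's route is shorter. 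In summary: the proof is much lighter than you anticipate---drop the verification of Assumption~\ref{as2} entirely and the argument is essentially two lines.
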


\begin{proof}
The existence of a smooth solution  to  \eqref{eqportfolio} was proved in Theorem \eqref{unbounded}. Let $u$ stands for the solution  constructed in that theorem. To prove that maximizer in  \eqref{eqportfolio} is an optimal feedback strategy it is sufficient to observe that for any strategy $\pi$
\[
  \mathbb{E}_{y,t}^{Q_{T}^{\pi}} \sup_{0 \leq s \leq T}e^{\int_{t}^{s}h(Y_{k},\delta_{k})\, dk} |u(Y_{s},s)| < +\infty
\]
and we can apply standard verfication argument (see for example the reasoning of Zawisza \cite[Appendix,Theorem 6.1]{Zawisza2}).
\end{proof} 

\begin{as} \label{as5} There exists a deterministic function $\kappa(t,n)$, $t>0$, $n \in \mathbb{N}$,  continuous in $t$ that for any progressively measurable control $(\pi)$ taking values in $[-R,R]$, we have 
\begin{equation*} 
\mathbb{E}_{y,0}^{Q^{\pi}_{t}} e^{\int_{0}^{t}( h(Y_{k},\pi_{k}))\, dk} \leq \kappa(t,n), \quad y \in B(0,n), \quad \int_{0}^{+ \infty} \kappa(t,n) dt < + \infty, 
\end{equation*}
where $h(y,\pi)= \gamma[r(y) +b(y)\pi - \frac{1}{2}(1-\gamma)\sigma^{2}(y)\pi^{2}] - w$.
\end{as}

Now it is right time to consider infinite horizon HJB:
\begin{multline}
\label{eqportfolio2}
  \frac{1}{2}u_{yy}+ i(y)u_{y} +\max_{\pi \in [-R,R]} \left( \rho \pi \sigma(y)  u_{y}+\left[\gamma b(y)\pi-\frac{1}{2} (\gamma - \gamma^{2})  \pi^{2} \sigma^{2}(y)\right]u   \right) \\ +  \max_{c \in [0,m]} \left(-\gamma cu+c^{\gamma}\right)+ [\gamma r(y)-w]u =0.
\end{multline}
\begin{prop}
Suppose that all conditions of Assumption \ref{as4} and Assumption \ref{as5} are  satisfied. Then 
 there exists  $\mathcal{C}^{2}(\mathbb{R}^{N})$ solution to \eqref{eqportfolio2}. Moreover, any Borel measurable maximizer in
 \eqref{eqportfolio2} is an optimal feedback strategy for $\mathcal{K}^{\pi,c}(x,y)$.
\end{prop}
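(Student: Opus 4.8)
The plan is to read \eqref{eqportfolio2} as an instance of \eqref{infinitequation}, to obtain the $\mathcal{C}^{2}$ solution from Theorem \ref{main}, and then to justify optimality of the maximiser with the help of Proposition \ref{convergence} and of the diffusion-convergence proposition that follows it. First I would put $D:=[-R,R]\times[0,m]$, write $\delta=(\pi,c)$, and introduce the transformed coefficients
\[
\bar i(y,\delta)=i(y)+\rho\sigma(y)\pi,\qquad
\bar h(y,\delta)=\gamma\Big[r(y)+b(y)\pi-\frac{1}{2}(1-\gamma)\sigma^{2}(y)\pi^{2}-c\Big]-w,\qquad
\bar f(y,\delta)=c^{\gamma},
\]
together with $\bar g\equiv 1$ for the finite-horizon version. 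With these data \eqref{eqportfolio2} is exactly \eqref{infinitequation}, and, by the Girsanov computation preceding \eqref{eqportfolio} (see also Zawisza \cite{zawisza}), $\frac{\gamma}{x^{\gamma}}\mathcal{K}^{\pi,c}(x,y)$ is the corresponding infinite-horizon payoff for the state equation $dY_{t}=\bar i(Y_{t},\delta_{t})\,dt+dW_{t}$ under $Q^{\pi}$. I would then check that Assumptions \ref{as} and \ref{as2} hold for $\bar i,\bar h,\bar f,\bar g$: the Lipschitz bounds are inherited from Assumption \ref{as4} and compactness of $D$ (with $L_{1}$ replaced by $L_{1}(1+R)$ and $L_{2}$ by $L_{2}+RL_{1}$), while $\bar f$ is bounded and $-\gamma c\le 0$ gives $e^{\int_{0}^{t}\bar h}\le e^{\int_{0}^{t} h(Y_{k},\pi_{k})\,dk}$, so Assumption \ref{as5} supplies (a constant multiple of) the function $\kappa$ required in Assumption \ref{as2} — possibly after strengthening its integrability to $\int_{0}^{\infty}e^{(L_{2}+RL_{1})t}\kappa(t,n)\,dt<+\infty$, which is automatic when $\kappa$ decays exponentially. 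Theorem \ref{main} then yields a $\mathcal{C}^{2}(\mathbb{R})$ solution $v$ of \eqref{eqportfolio2} which, after reinstating the factor $x^{\gamma}/\gamma$, is the value function; this proves the first assertion.

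For optimality I would follow the programme indicated after Proposition \ref{convergence}. Take $t_{n}\uparrow+\infty$. The finite-horizon proposition preceding \eqref{eqportfolio2} gives, for each $n$, a $\mathcal{C}^{2,1}$ solution $u^{t_{n}}$ of \eqref{eqportfolio} on $[0,t_{n}]$ whose Borel maximiser $\delta^{n}(y,t)=(\pi^{n}(y,t),c^{n}(y,t))$ is an optimal feedback for $\mathcal{J}$ with horizon $t_{n}$; here $\pi^{n}$ is the clipped maximiser of a strictly concave quadratic in $\pi$ and $c^{n}$ the clipped maximiser of $c\mapsto-\gamma cu+c^{\gamma}$, hence explicit continuous functions of $(y,u^{t_{n}}(y,t),u^{t_{n}}_{y}(y,t))$. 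Since $u^{t_{n}}(y,t)=v(y,t_{n}-t)$ and, by Assumption \ref{as5}, $v(\cdot,s)\to v$ as $s\to+\infty$ locally uniformly (the tails $\int_{s}^{\infty}\kappa(\tau,n)\,d\tau$ vanish), and since the a-priori estimates of Proposition \ref{unbounded} and of the proof of Theorem \ref{main} give local $\mathcal{C}^{2}$-bounds uniform in $n$, the feedbacks $\delta^{n}$ converge locally uniformly, hence pointwise, to the stationary maximiser $\delta^{*}=(\pi^{*},c^{*})$ of \eqref{eqportfolio2}. The associated drifts $b_{n}(y,t)=i(y)+\rho\sigma(y)\pi^{n}(y,t)$ then satisfy the hypotheses of the diffusion-convergence proposition — uniform linear growth, and uniform local Lipschitz constants once $n$ is large enough that $u^{t_{n}}$ stays bounded away from $0$ on the relevant compact time–space sets (using $v>0$ and non-degeneracy of $\sigma$) — so that $Y^{\delta^{n}}_{t}\to Y^{\delta^{*}}_{t}$ a.s.\ and $\delta^{n}_{t}\to\delta^{*}_{t}$ a.s. Proposition \ref{convergence} now yields
\[
\lim_{n\to\infty}\mathbb{E}_{y,0}^{Q^{\pi^{n}}_{t_{n}}}\int_{0}^{t_{n}}e^{\int_{0}^{s}\bar h(Y^{\delta^{n}}_{k},\delta^{n}_{k})\,dk}(c^{n}_{s})^{\gamma}\,ds
=\mathbb{E}_{y,0}^{Q^{\pi^{*}}}\int_{0}^{+\infty}e^{\int_{0}^{s}\bar h(Y^{\delta^{*}}_{k},\delta^{*}_{k})\,dk}(c^{*}_{s})^{\gamma}\,ds
=\frac{\gamma}{x^{\gamma}}\mathcal{K}^{\pi^{*},c^{*}}(x,y).
\]
On the other hand, optimality of $\delta^{n}$ for horizon $t_{n}$ means the left-hand side equals $u^{t_{n}}(y,0)-\mathbb{E}_{y,0}^{Q^{\pi^{n}}_{t_{n}}}e^{\int_{0}^{t_{n}}\bar h(Y^{\delta^{n}}_{k},\delta^{n}_{k})\,dk}$, where the terminal term is dominated by $\kappa(t_{n},n')\to 0$ and $u^{t_{n}}(y,0)=v(y,t_{n})\to v(y)=\frac{\gamma}{x^{\gamma}}\sup_{\pi,c}\mathcal{K}^{\pi,c}(x,y)$. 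Hence $\mathcal{K}^{\pi^{*},c^{*}}(x,y)=\sup_{\pi,c}\mathcal{K}^{\pi,c}(x,y)$, i.e.\ $\delta^{*}$ is optimal.

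I expect the main obstacle to be the middle step: making the three passages to the limit — the change of measure $Q^{\pi^{n}}\to Q^{\pi^{*}}$, the convergence $\delta^{n}\to\delta^{*}$ of the feedback maps, and the convergence $Y^{\delta^{n}}\to Y^{\delta^{*}}$ of the controlled states — mutually compatible. Concretely, one must extract from the uniform $\mathcal{C}^{2}$-bounds of Proposition \ref{unbounded} uniform-on-compacts Lipschitz and linear-growth bounds for the closed-loop drifts $b_{n}$, which forces a lower bound for $u^{t_{n}}$ (away from the terminal time this follows from $v>0$ together with the locally uniform convergence $v(\cdot,s)\to v$) and continuity of the argmax maps, i.e.\ non-degeneracy of $\sigma$. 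Once these uniform estimates are secured, the diffusion-convergence proposition and Proposition \ref{convergence} do the rest; checking Assumptions \ref{as}–\ref{as2} for the transformed data and invoking the finite-horizon verification theorem are routine given the earlier sections.
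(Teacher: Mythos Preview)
Your approach is essentially the same as the paper's: the paper's own proof is a two-line sketch that invokes Theorem \ref{main} for existence and says that optimality follows by proving ``an analogue to Proposition \ref{convergence}'', which is precisely the programme you carry out in detail (transforming the data, checking Assumptions \ref{as}--\ref{as2}, approximating by finite-horizon problems, and passing to the limit via the diffusion-convergence proposition). Your elaboration is considerably more explicit than what the paper provides, and the technical caveats you flag --- the possible need to strengthen the integrability of $\kappa$ when the effective $L_{2}$ shifts, and the uniform bounds needed for the closed-loop drifts --- are genuine but are equally glossed over in the paper.
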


\begin{proof}
A smooth classical solution to  \eqref{eqportfolio2} was constructed in Theorem \ref{main}. To prove that the maximizer in that equation determines the optimal strategy it is sufficient to prove analogue to Proposition \ref{convergence}.
\end{proof}

The lemma below shows how to determine the discount factor $w$ to be sure that all conditions of Assumption \ref{as5} are satisfied.
\begin{lem} Suppose that there exist $\alpha,\beta, P, Q \geq 0$, $\alpha \neq 0$  that $$i(y)\leq -\alpha y + \beta, \quad \gamma r(y) - w \leq -P + Qy, \quad \delta \in D, \; \eta \in \Gamma, \; y  \in \mathbb{R}.$$
 Then  for all continuous processes  $\pi$, we have
 \begin{multline*}
\int_{0}^{t}\left(\gamma[r(Y_{k}) +b(Y_{k})\pi_{k} - \frac{1}{2}(1-\gamma)\sigma^{2}(Y_{k})\pi_{k}^{2}] - w \right)dk\\
 \leq  \left[ \frac{ \gamma Q\beta}{\alpha} -\gamma P\right] t + \frac{\gamma Qy^{+}}{\alpha}+ \int_{0}^{t}\psi(Y_{s},s,t)ds + \gamma Q \int_{0}^{t} \frac{1}{\alpha} \left(1-e^{\alpha(s-t)}\right) dW_{s}^{\pi} ,
\end{multline*}
where
 $$\psi(y,s,t)=\max_{\pi \in [-R,R]} \left[\left[\frac{\gamma Q\rho}{\alpha}(1-e^{\alpha(s-t)}) +\gamma b(y)\right]\pi \sigma(y)dk  - \frac{1}{2}(\gamma-\gamma^{2})\sigma^{2}(y)\pi^{2} - w \right]$$
and $W^{\pi}:=\rho W^{1,\pi} + \sqrt{1-\rho^{2}}W^{2}$, $W^{1,\pi}_{t}=W^{1}_{t}-\int_{0}^{t} \pi_{s} \sigma(Y_{s}) ds $.
\end{lem}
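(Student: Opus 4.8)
The plan is to derive the stated inequality by the same Itô/Gronwall manipulations used in Proposition~\ref{example}, but now carried out under the change of measure $Q^{\pi}_{t}$ that linearizes the wealth dynamics. First I would observe that under $Q^{\pi}_{t}$ the process $W^{1,\pi}_{t}=W^{1}_{t}-\int_{0}^{t}\pi_{s}\sigma(Y_{s})\,ds$ is a Brownian motion (Girsanov), hence so is $W^{\pi}:=\rho W^{1,\pi}+\sqrt{1-\rho^{2}}W^{2}$, and the factor equation becomes $dY_{t}=\bigl(i(Y_{t})+\rho\pi_{t}\sigma(Y_{t})\bigr)dt+dW^{\pi}_{t}$. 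I would then apply Itô's formula to $e^{\alpha t}Y_{t}$ exactly as in Proposition~\ref{example}: using $i(y)\le-\alpha y+\beta$ one gets $d(e^{\alpha t}Y_{t})\le\bigl(\beta e^{\alpha t}+\rho\pi_{t}\sigma(Y_{t})e^{\alpha t}\bigr)dt+e^{\alpha t}dW^{\pi}_{t}$, and after integrating and dividing by $e^{\alpha t}$,
\[
Y_{t}\le y e^{-\alpha t}+\frac{\beta}{\alpha}(1-e^{-\alpha t})+\int_{0}^{t}e^{\alpha(s-t)}\rho\pi_{s}\sigma(Y_{s})\,ds+\int_{0}^{t}e^{\alpha(s-t)}dW^{\pi}_{s}.
\]

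Next I would integrate this bound over $[0,t]$, using the stochastic Fubini identity $\int_{0}^{t}\int_{0}^{s}e^{\alpha(k-s)}dW^{\pi}_{k}\,ds=\int_{0}^{t}\tfrac{1}{\alpha}(1-e^{\alpha(s-t)})dW^{\pi}_{s}$ (and the analogous deterministic identity for the drift term), to obtain
\[
\int_{0}^{t}Y_{s}\,ds\le\frac{y^{+}}{\alpha}+\frac{\beta}{\alpha}t+\int_{0}^{t}\frac{\rho}{\alpha}(1-e^{\alpha(s-t)})\pi_{s}\sigma(Y_{s})\,ds+\int_{0}^{t}\frac{1}{\alpha}(1-e^{\alpha(s-t)})dW^{\pi}_{s}.
\]
Then I would feed this into the exponent: since $\gamma r(y)-w\le-P+Qy$, the term $\int_{0}^{t}(\gamma r(Y_{k})-w)dk$ is bounded by $-Pt+Q\int_{0}^{t}Y_{s}\,ds$, and substituting the above yields the claimed $\bigl[\frac{\gamma Q\beta}{\alpha}-\gamma P\bigr]t+\frac{\gamma Qy^{+}}{\alpha}$ plus the stochastic integral $\gamma Q\int_{0}^{t}\frac{1}{\alpha}(1-e^{\alpha(s-t)})dW^{\pi}_{s}$ plus a leftover $\int_{0}^{t}\bigl[\frac{\gamma Q\rho}{\alpha}(1-e^{\alpha(s-t)})\pi_{s}\sigma(Y_{s})\bigr]dk$.

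The remaining piece of the exponent is $\int_{0}^{t}\bigl(\gamma b(Y_{k})\pi_{k}-\tfrac{1}{2}(1-\gamma)\gamma\sigma^{2}(Y_{k})\pi_{k}^{2}\bigr)dk$; adding this to the leftover term above, the integrand becomes precisely $\bigl[\frac{\gamma Q\rho}{\alpha}(1-e^{\alpha(s-t)})+\gamma b(Y_{s})\bigr]\pi_{s}\sigma(Y_{s})-\tfrac{1}{2}(\gamma-\gamma^{2})\sigma^{2}(Y_{s})\pi_{s}^{2}$, which is bounded pointwise in $s$ by the maximization over $\pi\in[-R,R]$ that defines $\psi(Y_{s},s,t)$ (here one absorbs the constant $-w$ into the definition as written). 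Collecting the deterministic part, the $\psi$-integral, and the single stochastic integral gives exactly the asserted inequality. The only mild subtlety — the part I would be most careful about — is the bookkeeping in the stochastic Fubini step and making sure the measure change is applied consistently so that $W^{\pi}$ really is the driving Brownian motion under $Q^{\pi}_{t}$; the rest is the routine Gronwall-free computation already rehearsed in Proposition~\ref{example}.
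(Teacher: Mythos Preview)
Your proposal is correct and follows essentially the same route as the paper: Girsanov to identify the $Q^{\pi}$-dynamics of $Y$, the It\^o computation on $e^{\alpha t}Y_{t}$ from Proposition~\ref{example} (now with the extra drift $\rho\pi\sigma$), the Fubini/integration-by-parts identity to bound $\int_{0}^{t}Y_{s}\,ds$, and then substitution into $\gamma r(Y_{k})-w\le -P+QY_{k}$ together with the remaining $b,\sigma$ terms. The only cosmetic difference is that you state the final pointwise maximization over $\pi\in[-R,R]$ explicitly, whereas the paper leaves the integrand with $\pi_{k}$ and lets the reader take the sup; otherwise the arguments coincide.
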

\begin{proof}
Note that under the measure $Q_{T}^{\pi}$ the process $Y$ has the following dynamics
$$dY_{t}=[i(Y_{t}) + \rho \pi_{t}\sigma(Y_{t})]dt +  dW_{t}^{\pi}.$$
Repeating the steps from the proof of Proposition \ref{example} we get

\begin{equation*}
 \int_{0}^{t} Y_{s} \leq \ \frac{y^{+}}{\alpha}  + \frac{\beta}{\alpha} t + \rho \int_{0}^{t}  e^{-\alpha s}\int_{0}^{s} e^{\alpha k} \pi_{k}\sigma(Y_{k})dk ds + \int_{0}^{t} \frac{1}{\alpha} \left(1-e^{\alpha(s-t)}\right) dW_{s}^{\pi}.
\end{equation*}
The integration by parts gives us
\[
\int_{0}^{t}  e^{-\alpha s}\int_{0}^{s} e^{\alpha k} \rho \pi_{k}\sigma(Y_{k}) dk ds = \frac{\rho}{\alpha} \int_{0}^{t} (1-e^{\alpha(k-t)})\pi_{k} \sigma(Y_{k})dk.
\]
Therefore
\[
 \int_{0}^{t}\left(\gamma r(Y_{k}) - w\right)dk \leq -Pt + \frac{Qy^{+}}{\alpha}+\frac{Q\rho}{\alpha} \int_{0}^{t} (1-e^{\alpha(k-t)})\pi_{k}\sigma(Y_{k}) dk  + \frac{Q\beta}{\alpha} t +  Q \int_{0}^{t} \frac{1}{\alpha} \left(1-e^{\alpha(s-t)}\right) dW_{s}^{\pi}
\]
and consequently
\begin{multline*}
  \int_{0}^{t}\gamma[r(Y_{k}) +b(Y_{k})\pi_{k} - \frac{1}{2}(1-\gamma)\sigma^{2}(Y_{k})\pi_{k}^{2}] - w dk \\ \leq  \left[ \frac{ \gamma Q\beta}{\alpha} -\gamma P\right] t + \frac{\gamma Qy^{+}}{\alpha}+ \int_{0}^{t}\left[\frac{\gamma Q\rho}{\alpha}(1-e^{\alpha(k-t)}) +\gamma b(Y_{k})\right]\pi_{k} \sigma(Y_{k})dk \\ - \frac{1}{2}\int_{0}^{t}\left[(\gamma-\gamma^{2})\sigma^{2}(Y_{k})\pi_{k}^{2} - w \right]dk + \gamma Q \int_{0}^{t} \frac{1}{\alpha} \left(1-e^{\alpha(s-t)}\right) dW_{s}^{\pi}.
 \end{multline*}
\end{proof}

Above results can be easily extended to multiasset and multifactor models. For possible direction of generalization see for instance, Berdjane and Pergamenshchikov \cite{perga}, Noh and Kim \cite{nohkim}. Moreover Remark \ref{robust} indicates possible extensions in robust portfolio optimization problems (see Schied \cite{schied}, Flor and Larsen \cite{flor} or Gagliardini et al.  \cite{gagliardini}).

\end{document}